\newtheorem{theorem}{Theorem}
\newtheorem{lemma}[theorem]{Lemma}
\theoremstyle{definition}
\newtheorem{remark}[theorem]{Remark}
\title[A loop type component in the non-negative solutions set] 
      {A loop type component in the non-negative solutions set of an indefinite elliptic problem}
\author[Humberto Ramos Quoirin and Kenichiro Umezu]{}
\subjclass[2000]{Primary: 35J25, 35J61; Secondary: 35B32.}
 \keywords{Semilinear elliptic problem, non-negative solution, loop type component, bifurcation, topological method.}
 \email{humberto.ramos@usach.cl}
 \email{kenichiro.umezu.math@vc.ibaraki.ac.jp}
\thanks{The first author was supported by the FONDECYT grants 1161635, 1171532 and 1171691.}
\thanks{The second author was supported by JSPS KAKENHI Grant Number 15K04945.}
\begin{document}

\maketitle

\centerline{\scshape Humberto Ramos Quoirin}
\medskip
{\footnotesize
 \centerline{Universidad de Santiago de Chile }
   \centerline{Casilla 307, Correo 2, Santiago, Chile}
} 

\medskip

\centerline{\scshape Kenichiro Umezu}
\medskip
{\footnotesize
 \centerline{Department of Mathematics, Faculty of Education}
 \centerline{Ibaraki University, Mito 310-8512, Japan}
}

\bigskip


\begin{abstract}
We prove the existence of a loop type component of non-negative solutions for 
an indefinite elliptic equation with a homogeneous Neumann boundary condition. 
This result complements our previous results obtained in \cite{RQU}, where the 
existence of another loop type component was established in a different 
situation. Our proof combines local and global bifurcation theory, rescaling 
and regularizing arguments, a priori bounds, and Whyburn's topological method. 
A further investigation of the loop type component established in \cite{RQU} 
is also provided.
\end{abstract}


\section{Introduction}

Let $\Omega$ be a smooth bounded domain of $\mathbb{R}^N$,
$N\geq 1$. This article is devoted to the problem
\[
\begin{cases}
-\Delta u = \lambda b(x) u^{q-1} + a(x)u^{p-1} & \mbox{in $\Omega$}, \\
\dfrac{\partial u}{\partial \mathbf{n}} = 0 & \mbox{on $\partial \Omega$},
\end{cases} \leqno{(P_\lambda)}
\]
where
\begin{itemize}
\item $\lambda \in \mathbb{R}$;
\item  $1<q<2<p$;
\item  $a,b \in C^\alpha (\overline{\Omega})$, $\alpha \in (0,1)$;
\item  $a\not\equiv 0$ and $b$ changes sign;
\item $\mathbf{n}$ is the unit outer normal to $\partial \Omega$.
\end{itemize}

By a solution of $(P_\lambda)$, we mean a classical solution. A solution $u$ of $(P_\lambda)$ is said to be {\it nontrivial and non-negative} if it satisfies $u\geq 0$ on $\overline{\Omega}$ and $u\not\equiv 0$, whereas it is said to be  {\it positive} if it satisfies $u>0$ on $\overline{\Omega}$. Note that since $b$ changes sign and $1<q<2$, the strong maximum principle does not apply and, as a consequence, we can not deduce that nontrivial non-negative solutions of $(P_\lambda)$ are actually positive solutions,
unlike in the case $q\geq 2$.

In \cite{RQU} we have investigated existence, non-existence, and multiplicity of non-negative solutions as well as their asymptotic behavior as $\lambda \to 0$. These results led us to analyse the structure of the set of non-negative solutions of $(P_\lambda)$. In particular, we have proved the existence of a loop type component in this set, under the following conditions (see \cite[Theorem 1.6]{RQU} and Figure \ref{fig16_0730f}):
\begin{align} \label{prehypo}
a,b \mbox{ change sign,} \quad \int_\Omega b \leq 0, \quad \mbox{and}
\quad \int_\Omega a <0.
\end{align}
We shall assume that $a$ and $b$ are positive in some open ball (see $(H_0)$ below), in which case the nonlinearity in $(P_\lambda)$ has (locally) a {\it concave-convex} nature. We refer the reader to \cite{RQU} for a more general discussion on $(P_\lambda)$ and related concave-convex problems.

Our purpose is to go further in this investigation, focusing now mostly on the case
\begin{equation}\label{160730hypo}
\int_\Omega b < 0 \leq \int_\Omega a.
\end{equation}
Before stating our result, let us set
\begin{align*}
\Omega^a_\pm = \{ x \in \Omega : a\gtrless 0 \}, \quad
\Omega^b_\pm = \{ x \in \Omega : b\gtrless 0 \}.
\end{align*}

The following conditions shall be assumed in our main result:
\begin{enumerate}

\item[($H_0$)] $a(x_0), b(x_0)>0$ for some $x_0 \in \Omega$;

\item[($H_1$)] $\Omega^a_+$ and $\Omega' := \Omega \setminus \overline{\Omega^a_+}$ are subdomains of $\Omega$ with smooth boundaries, and satisfy either $\overline{\Omega^a_+}\subset \Omega$ or $\overline{\Omega'} \subset \Omega$;

\item[($H_2$)] There exist $\gamma > 0$ and a function $\alpha^+$ which is continuous, positive, and bounded away from zero in a tubular neighborhood $U$ of $\partial \Omega^a_+$ in $\Omega^a_+$, such that
\begin{align*}
& a^+(x) = \alpha^+(x) \, {\rm dist}(x, \partial \Omega^a_+)^\gamma, \quad x \in U, \\
& 2 < p < \min \left\{ \frac{2N}{N-2}, \ \frac{2N+\gamma}{N-1} \right\} \quad \mbox{if} \quad N>2;
\end{align*}
\item[($H_3$)] $\Omega^b_\pm$ are subdomains of $\Omega$.
\end{enumerate}

These conditions guarantee some {\it a priori} bounds in $(0,\infty) \times C(\overline{\Omega})$ for non-negative solutions. More precisely, $(H_0)$ implies that $(Q_{\mu, \epsilon})$, a rescaled and regularized version of
$(P_\lambda)$, has no positive solutions for $\mu$ sufficiently large,
similarly as \cite[Proposition 6.1]{RQU}. On the other hand, $(H_1)$ and $(H_2)$ provide us with an {\it a priori} bound on $\|u\|_{C(\overline{\Omega})}$ for any non-negative solution $u$ of $(Q_{\mu, \epsilon})$,
see \cite[Proposition 6.5]{RQU}. Let us mention that $(H_2)$ goes back to Amann and  L\'{o}pez-G\'{o}mez \cite{ALG}, where the authors have established {\it a priori} bounds for positive solutions of indefinite elliptic problems. Finally, $(H_3)$ is employed to show that bifurcation from zero for nontrivial non-negative solutions of $(P_\lambda)$ does not occur at $\lambda \neq 0$,
as in \cite[Proposition 3.3]{RQU}.

We state now our main result,
which gives a positive answer to the open question raised in Subsection 6.1 of \cite{RQU}. 	
Note that, in contrast with \cite[Theorem 1.6]{RQU}, $a$ may be non-negative.

\begin{theorem}  \label{tp}
Assume \eqref{160730hypo}, $(H_0)$ and $(H_3)$. In addition, assume one of the following conditions:
\begin{enumerate}
  \item[(a)] $a > 0$ on $\overline{\Omega}$, and $2<p<\frac{2N}{N-1}$ if $N>2$;
  \item[(b)] $(H_1)$ and $(H_2)$ hold, and $(\Omega' \setminus \Omega^a_-) \subset \Omega^b_+$ if $\Omega' \neq \Omega^a_-$.
\end{enumerate}
Then $(P_\lambda)$ has a bounded
{\rm component} (non-empty, closed, and connected subset in $\mathbb{R}\times C(\overline{\Omega})$) of non-negative solutions $\mathcal{C}_0 = \{ (\lambda, u) \}$. In addition, $\mathcal{C}_0$ is of loop type, i.e., it is a bounded component that meets a single point on a trivial solution line and
joins this point to itself. More precisely, $\mathcal{C}_0$ starts and ends at $(0,0)$, and  has the following properties (see Figure \ref{fig16_0730e}):
\begin{enumerate}
  \item $\mathcal{C}_0 \cap \{ (\lambda, 0) : \lambda \neq 0 \} = \emptyset$.
Consequently, $\mathcal{C}_0 \setminus \{ (0,0) \}$ consists of nontrivial non-negative solutions.

  \item $(P_\lambda)$ has no nontrivial non-negative solution for $\lambda=0$, so that $u\equiv 0$ if $(0, u) \in \mathcal{C}_0$.

  \item There is no $(\lambda,u) \in \mathcal{C}_0$ with $\lambda<0$, i.e., $\mathcal{C}_0$ bifurcates to the region $\lambda > 0$ at $(0,0)$.

  \item There exist at least two nontrivial non-negative solutions $(\lambda,u_{1,\lambda}), (\lambda,u_{2,\lambda}) \in \mathcal{C}_0$, for $\lambda > 0$ sufficiently small.
\end{enumerate}
\end{theorem}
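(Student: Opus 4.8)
The plan is to follow the scheme used in \cite{RQU} to produce the loop of \cite[Theorem 1.6]{RQU}, the substantial new work being to re-establish the required \emph{a priori} bounds under \eqref{160730hypo} together with (a) or (b), in place of \eqref{prehypo}. First I would prove, for the rescaled and regularized problem $(Q_{\mu,\epsilon})$ under the present hypotheses, the three facts used in \cite{RQU}: non-existence of positive solutions for $\mu$ large, which follows from $(H_0)$ as in \cite[Proposition 6.1]{RQU}; a uniform \emph{a priori} bound in $C(\overline{\Omega})$ for non-negative solutions with $\mu$ in compact sets and $\epsilon$ small, which under (a) follows from $a>0$ on $\overline{\Omega}$ and the subcriticality $2<p<\frac{2N}{N-1}$ (for $N>2$) through a rescaling argument, and under (b) from the boundary blow-up analysis of Amann and L\'opez-G\'omez \cite{ALG} using $(H_1)$, $(H_2)$ and $(\Omega'\setminus\Omega^a_-)\subset\Omega^b_+$, as in \cite[Proposition 6.5]{RQU}; and, from $(H_3)$, that bifurcation of nontrivial non-negative solutions from the line $\{(\lambda,0)\}$ cannot occur at $\lambda\neq 0$, as in \cite[Proposition 3.3]{RQU}. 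Transported back to $(P_\lambda)$ through the rescaling, the first two facts give \emph{a priori} bounds for non-negative solutions, so the component to be built will be bounded, while the third gives assertion (1). I would also record the elementary non-existence statements (as in \cite{RQU}): $u\equiv 0$ is the only non-negative solution of $(P_0)$ — integrate $-\Delta u=au^{p-1}$ and use $a\not\equiv 0$ when $a\ge 0$, with a short regularizing argument when $a$ changes sign — giving (2); and there is no nontrivial non-negative solution for $\lambda<0$, giving (3).

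Next I would run the global bifurcation machinery on a regularization of $(P_\lambda)$ in which the concave term is smoothed near $u=0$ — for instance replacing $b(x)u^{q-1}$ by $b(x)\big[(u+\delta)^{q-1}-\delta^{q-1}\big]$ — so that $u\equiv 0$ solves it for every $\lambda$ and the linearization at $0$ is the $b$-weighted Neumann eigenvalue problem $-\Delta\varphi=\lambda\kappa(\delta)\,b\,\varphi$. Since $\int_\Omega b<0$, this has exactly two points admitting a positive eigenfunction, $\lambda=0$ and a point $\lambda_\delta>0$, and $\lambda_\delta\to 0$ as $\delta\to 0$. By the Crandall--Rabinowitz theorem a continuum of nontrivial non-negative solutions emanates from $(\lambda_\delta,0)$, and by Rabinowitz's global alternative the component $\mathcal{C}^\delta$ carrying it is either unbounded or returns to the trivial line; the $\delta$-uniform bounds of the first step rule out the former, so $\mathcal{C}^\delta$ returns to $\{(\lambda,0)\}$, necessarily at $\lambda=0$ (where the regularized problem reduces to $(P_0)$, which by (2) has only the trivial solution). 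Thus $\mathcal{C}^\delta$ is a bounded loop joining $(\lambda_\delta,0)$ to $(0,0)$. Letting $\delta\to 0$ and invoking Whyburn's topological lemma on superior limits of continua then yields a bounded connected set $\mathcal{C}_0$ of non-negative solutions of $(P_\lambda)$ containing $\lim_{\delta\to 0}(\lambda_\delta,0)=(0,0)$; by the first step, $\mathcal{C}_0$ meets the trivial line only at $(0,0)$, lies in $\{\lambda\ge 0\}$, and starts and ends at $(0,0)$, i.e.\ is of loop type, which settles (1)--(3). Finally, (4) follows from the loop structure: near $(0,0)$ the loop carries two arcs entering $\{\lambda>0\}$, so for every sufficiently small $\lambda>0$ the slice $\{\lambda\}\times C(\overline{\Omega})$ meets $\mathcal{C}_0$ in at least two points; if needed this is reinforced by a Leray--Schauder degree count producing a ``small'' non-negative solution from the rescaled problem and forcing a second one of opposite local index.

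The hardest part will be the \emph{a priori} bound in $C(\overline{\Omega})$ under (b): coping with the indefinite weight $a$ forces one through the delicate boundary blow-up argument of \cite{ALG}, which is precisely what the conditions $(H_1)$--$(H_2)$ are tailored for. A secondary difficulty is to prevent the limit continuum $\mathcal{C}_0$ from collapsing to the single point $(0,0)$ as $\delta\to 0$: this needs a lower bound, uniform in $\delta$, on the ``size'' of the approximating continua $\mathcal{C}^\delta$ — for instance a nontrivial non-negative solution of the regularized problem bounded away from $0$ at some fixed $\lambda>0$ — obtained variationally or by a degree argument.
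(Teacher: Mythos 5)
Your overall scheme (regularize, bifurcate from the positive principal eigenvalue of the $b$-weighted linearized problem, bound the continua, pass to the limit by Whyburn's theorem) is the right family of ideas, but applied directly to $(P_\lambda)$ it runs into exactly the obstruction you relegate to a ``secondary difficulty'' at the end, and that obstruction is in fact the central one: the paper's proof is organized entirely around circumventing it. Under \eqref{160730hypo}, $(P_0)$ has \emph{no} nontrivial non-negative solution (your assertion (2), from \cite[Lemma 6.8(1)]{RQU}). Your regularized continuum $\mathcal{C}^\delta$ joins $(\lambda_\delta,0)$ to $(0,0)$, and since $\lambda_\delta\to 0$, both of its anchor points on the trivial line collapse to $(0,0)$; Whyburn's theorem then only guarantees that $\limsup_{\delta\to 0}\mathcal{C}^\delta$ is a continuum containing $(0,0)$, and nothing prevents it from being exactly $\{(0,0)\}$. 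In the case \eqref{prehypo} treated in \cite[Theorem 1.6]{RQU}, collapse is excluded because each approximating continuum must cross $\lambda=0$ at a positive solution whose norm is bounded below uniformly in the regularization parameter; under \eqref{160730hypo} that anchor is unavailable precisely because of (2). Your suggested repair --- produce, variationally or by degree, a solution of the regularized problem at some fixed $\lambda>0$ bounded away from $0$ --- does not close the gap: such arguments give existence of a solution, not its membership in the specific component $\mathcal{C}^\delta$, so the limit continuum could still degenerate to $\{(0,0)\}$ while the extra solutions sit on a different component.

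The paper's resolution is a preliminary change of variables $v=\lambda^{-1/(p-q)}u$, $\mu=\lambda^{(p-2)/(p-q)}$, turning $(P_\lambda)$ into $(Q_\mu)$, whose regularization $(Q_{\mu,\epsilon})$ possesses a \emph{second} trivial line $\Gamma_{00}=\{(0,c):c\geq 0\}$ of constants at $\mu=0$. The component bifurcating from $(\mu_\epsilon,0)$ must then either reach $\Gamma_{00}$ at $(0,c^*_\epsilon)$ with $c^*_\epsilon\to c^*_0>0$ (possible only when $\int_\Omega a>0$) or exit every ball $B_\rho$; either way its diameter is bounded below uniformly in $\epsilon$, and an intermediate-value argument combined with the divergence-theorem identity shows that the Whyburn limit contains a solution with $\mu>0$, so it is not trivial. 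Since the inverse scaling sends the whole segment of $\Gamma_{00}$ back to the single point $(0,0)$ of the $(\lambda,u)$-plane, the loop for $(P_\lambda)$ and, importantly, assertion (4) fall out of this structure: for small $\mu>0$ the component of $(Q_\mu)$ carries one point near $v=0$ and one near $v=c^*_0$ (or of norm $\rho$), giving two distinct solutions of $(P_\lambda)$ for small $\lambda>0$. Your derivation of (4) from ``the loop carries two arcs entering $\{\lambda>0\}$'' is not justified by connectedness alone and needs this (or an equivalent) quantitative input. In short, the scaling to $(Q_\mu)$ is not an optional convenience but the device that makes the limiting argument non-degenerate, and without it your proof has a genuine gap at its key step.
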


\begin{remark}
\strut
\begin{enumerate}
  \item Condition (a) can be understood as $(H_2)$ with $\gamma = 0$.
  \item Condition (b) includes the following cases:
\begin{enumerate}
  \item[(1)]
$a(x)>0$ in $\Omega$, and $a(x)$ vanishes on $\partial \Omega$. This situation is understood as $\Omega' = \emptyset$.
\item[(2)] $\{ x\in \Omega' : a(x)=0 \}\neq \emptyset$. In particular, it includes the case that $a(x)$ changes sign, as well as the case that $a(x) \geq 0$ in $\Omega$. In both cases we need that $b(x)>0$ in $\{ x\in \Omega' : a(x)=0 \}$;

\end{enumerate}
\end{enumerate}
\end{remark}

\begin{remark}
The existence of the loop type component $\mathcal{C}_0$ provided by Theorem \ref{tp} is consistent with \cite[Theorem 1.1]{RQU}. As a matter of fact, in \cite[Theorem 1.1]{RQU} it is proved that if \eqref{160730hypo} holds then $(P_\lambda)$ has two nontrivial non-negative solutions for $\lambda>0$ sufficiently small. Moreover, these solutions converge both to $0$ in $C^2(\overline{\Omega})$ as $\lambda \to 0^+$. We believe that these solutions correspond to the upper and lower branches of $\mathcal{C}_0$.
\end{remark}

	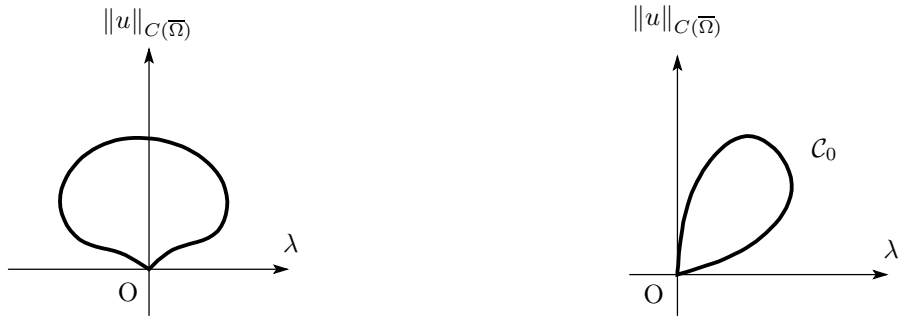
\begin{figure}[!htb]
      \begin{center}
      \subfigure[Minimal possibility for $\mathcal{C}_0$ when $a,b$ change sign, $\int_\Omega b \leq 0$, and $\int_\Omega a < 0$.]{
    %
{\unitlength 0.1in
\begin{picture}( 17.6900, 16.0300)( 21.5900,-23.1000)
%
\special{pn 8}%
\special{pa 2470 2068}%
\special{pa 3928 2068}%
\special{fp}%
\special{sh 1}%
\special{pa 3928 2068}%
\special{pa 3862 2048}%
\special{pa 3876 2068}%
\special{pa 3862 2088}%
\special{pa 3928 2068}%
\special{fp}%
%
\special{pn 8}%
\special{pa 3206 2310}%
\special{pa 3206 916}%
\special{fp}%
\special{sh 1}%
\special{pa 3206 916}%
\special{pa 3186 984}%
\special{pa 3206 970}%
\special{pa 3226 984}%
\special{pa 3206 916}%
\special{fp}%
\put(30.9700,-21.8400){\makebox(0,0){O}}%
\put(39.4800,-19.2600){\makebox(0,0){$\lambda$}}%
\put(31.9900,-7.8700){\makebox(0,0){$\| u \|_{C(\overline{\Omega})}$}}%
%
\special{pn 20}%
\special{pa 3206 2068}%
\special{pa 3252 2022}%
\special{pa 3304 1982}%
\special{pa 3330 1966}%
\special{pa 3360 1954}%
\special{pa 3424 1934}%
\special{pa 3456 1926}%
\special{pa 3488 1916}%
\special{pa 3516 1904}%
\special{pa 3542 1888}%
\special{pa 3566 1868}%
\special{pa 3584 1842}%
\special{pa 3598 1814}%
\special{pa 3608 1782}%
\special{pa 3614 1748}%
\special{pa 3616 1714}%
\special{pa 3612 1680}%
\special{pa 3604 1646}%
\special{pa 3594 1616}%
\special{pa 3578 1586}%
\special{pa 3560 1560}%
\special{pa 3538 1534}%
\special{pa 3516 1512}%
\special{pa 3490 1490}%
\special{pa 3464 1470}%
\special{pa 3408 1438}%
\special{pa 3378 1424}%
\special{pa 3318 1404}%
\special{pa 3288 1396}%
\special{pa 3256 1388}%
\special{pa 3224 1384}%
\special{pa 3160 1380}%
\special{pa 3128 1380}%
\special{pa 3096 1382}%
\special{pa 3064 1386}%
\special{pa 3032 1392}%
\special{pa 2972 1412}%
\special{pa 2942 1426}%
\special{pa 2912 1442}%
\special{pa 2886 1460}%
\special{pa 2858 1480}%
\special{pa 2834 1504}%
\special{pa 2812 1530}%
\special{pa 2790 1558}%
\special{pa 2774 1586}%
\special{pa 2760 1616}%
\special{pa 2748 1648}%
\special{pa 2742 1680}%
\special{pa 2740 1712}%
\special{pa 2742 1744}%
\special{pa 2750 1778}%
\special{pa 2762 1808}%
\special{pa 2776 1838}%
\special{pa 2796 1866}%
\special{pa 2818 1890}%
\special{pa 2842 1912}%
\special{pa 2870 1928}%
\special{pa 2900 1940}%
\special{pa 2930 1950}%
\special{pa 2962 1958}%
\special{pa 2992 1964}%
\special{pa 3024 1972}%
\special{pa 3054 1982}%
\special{pa 3084 1994}%
\special{pa 3140 2026}%
\special{pa 3194 2060}%
\special{pa 3206 2068}%
\special{fp}%
\end{picture}}%
          \label{fig16_0730f}
      }
      \hfill
      \subfigure[Minimal possibility for $\mathcal{C}_0$ when $a\not\equiv 0$, $b$ changes sign, $\int_\Omega b < 0$, and $\int_\Omega a \geq 0$.]{
{\unitlength 0.1in
\begin{picture}( 21.3500, 16.2200)( 14.3900,-22.1000)
%
\special{pn 8}%
\special{pa 2230 1996}%
\special{pa 3574 1996}%
\special{fp}%
\special{sh 1}%
\special{pa 3574 1996}%
\special{pa 3508 1976}%
\special{pa 3522 1996}%
\special{pa 3508 2016}%
\special{pa 3574 1996}%
\special{fp}%
%
\special{pn 8}%
\special{pa 2480 2210}%
\special{pa 2480 854}%
\special{fp}%
\special{sh 1}%
\special{pa 2480 854}%
\special{pa 2460 922}%
\special{pa 2480 908}%
\special{pa 2500 922}%
\special{pa 2480 854}%
\special{fp}%
\put(23.5400,-20.9600){\makebox(0,0){O}}%
\put(35.9800,-18.5800){\makebox(0,0){$\lambda$}}%
\put(24.7900,-6.6800){\makebox(0,0){$\| u \|_{C(\overline{\Omega})}$}}%
%
\special{pn 20}%
\special{pa 2480 1996}%
\special{pa 2542 1978}%
\special{pa 2632 1948}%
\special{pa 2664 1938}%
\special{pa 2694 1928}%
\special{pa 2722 1916}%
\special{pa 2752 1902}%
\special{pa 2780 1888}%
\special{pa 2810 1874}%
\special{pa 2838 1858}%
\special{pa 2892 1822}%
\special{pa 2944 1778}%
\special{pa 2992 1730}%
\special{pa 3014 1704}%
\special{pa 3032 1676}%
\special{pa 3050 1646}%
\special{pa 3062 1616}%
\special{pa 3072 1586}%
\special{pa 3078 1554}%
\special{pa 3078 1522}%
\special{pa 3074 1488}%
\special{pa 3064 1456}%
\special{pa 3050 1422}%
\special{pa 3032 1392}%
\special{pa 3012 1362}%
\special{pa 2988 1336}%
\special{pa 2960 1314}%
\special{pa 2932 1294}%
\special{pa 2902 1280}%
\special{pa 2872 1272}%
\special{pa 2842 1270}%
\special{pa 2812 1276}%
\special{pa 2784 1286}%
\special{pa 2756 1300}%
\special{pa 2728 1320}%
\special{pa 2702 1344}%
\special{pa 2678 1368}%
\special{pa 2654 1396}%
\special{pa 2614 1452}%
\special{pa 2598 1482}%
\special{pa 2582 1510}%
\special{pa 2554 1570}%
\special{pa 2544 1600}%
\special{pa 2534 1632}%
\special{pa 2518 1692}%
\special{pa 2510 1724}%
\special{pa 2506 1756}%
\special{pa 2496 1820}%
\special{pa 2492 1850}%
\special{pa 2488 1882}%
\special{pa 2486 1914}%
\special{pa 2484 1948}%
\special{pa 2480 1980}%
\special{pa 2480 1996}%
\special{fp}%
\put(32.5000,-13.4000){\makebox(0,0){$\mathcal{C}_0$}}%
\end{picture}}%
        \label{fig16_0730e}
      }
      \end{center}
      \caption{Loop type components of nontrivial non-negative solutions of $(P_\lambda)$.}
      \label{fig15_111402}
    \end{figure}
%
%
%
The existence  
of bounded (or compact) components in the solution set of nonlinear problems has been investigated by Cingolani and G\'amez \cite{CD96}, Cano-Casanova \cite{Ca04}, L\'opez-G\'omez and Molina-Meyer \cite{LGMM}, and Brown \cite{B07}. A {\it mushroom}, i.e. a component connecting two simple eigenvalues of the linearized eigenvalue problem at the trivial solution $u=0$,  was obtained by Cingolani and G\'amez for both the Dirichlet case and $\Omega = \mathbb{R}^N$, and by Cano-Casanova for a mixed linear boundary condition. In addition to the existence of a mushroom, L\'opez-G\'omez and Molina-Meyer (for the Dirichlet case) and Brown (for the Neumann case) obtained a {\it loop}, i.e. a component that meets a single point on the trivial solution line. Moreover, L\'opez-G\'omez and Molina-Meyer also proved the existence of an  {\it isola}, i.e. a component that does not touch the trivial solution line.
Finally, we refer to \cite[Theorem 1.6]{RQU}, where the existence of a loop type component for $(P_\lambda)$ was proved in case \eqref{prehypo}.

Let us remark that the nonlinearities in \cite{CD96, Ca04, LGMM, B07} are $C^1$ at $u=0$, which is not the case for $(P_\lambda)$. Therefore the standard global bifurcation theory of Rabinowitz \cite{Ra71} (see also L\'opez-G\'omez \cite{LG01}) does not apply straightforwardly to $(P_\lambda)$.
To overcome this difficulty, we employ a {\it regularization}  method around the trivial solution and develop Whyburn's topological analysis
\cite[(9.12)Theorem]{Wh64} to convert the bifurcation results obtained for the regularized problem to the original problem.
In fact, before considering the regularization, we carry out a {\it scaling} argument to overcome a difficulty which appears in case \eqref{160730hypo}. Unlike in case  \eqref{prehypo}, it is difficult to study directly $(P_\lambda)$ and its
regularization under \eqref{160730hypo}, since these problems have no positive solutions for $\lambda = 0$ \cite[Lemma 6.8(1)]{RQU}. Even if we can prove the existence of a component of positive solutions for the regularized problem, the non-existence result for $\lambda=0$ may cause the shrinking of the component into the set of trivial solutions when the topological method is employed. It should be emphasized that in order to obtain the loop in case \eqref{prehypo} as in Figure
\ref{fig16_0730f}, the following fact was crucial: a component of positive solutions for the $\epsilon$-regularized problem of $(P_\lambda)$ {\it does cut} the vertical axis $\lambda = 0$, at some point that {\it does not shrink} to $(0,0)$ as $\epsilon \to 0$.


In order to verify that a component of non-negative solutions of $(P_\lambda)$ is bounded in $(0,\infty) \times C(\overline{\Omega})$, we shall make good use of {\it a priori} bounds for non-negative solutions of $(P_\lambda)$, as well as for non-negative solutions of $(Q_\mu)$ and $(Q_{\mu, \epsilon})$ below. We obtain these {\it a priori} bounds under either conditions (a) or (b) in Theorem \ref{tp}, proceeding in the same way
just as in \cite[Proposition 6.5]{RQU}.

The rest 
of this article is organized as follows. In Section \ref{sec:sr}, by the change of variables $\mu = \lambda^{\frac{p-2}{p-q}}$ and $v=\lambda^{-\frac{1}{p-q}} u$, we transform $(P_\lambda)$ into $(Q_\mu)$, and consider a $\epsilon$-regularized version of $(Q_\mu)$, i.e. $(Q_{\mu, \epsilon})$. This regularization scheme enables us to apply the local and global bifurcation theory from simple eigenvalues. We deduce then the existence of a  component of bifurcating positive solutions of $(Q_{\mu, \epsilon})$ from $\{ (\mu, 0)\}$. Section \ref{sec:proof} is devoted to the proof of our main result, Theorem \ref{tp}. Using Whyburn's topological method, we establish the limiting behavior of the component of $(Q_{\mu, \epsilon})$ obtained in Section \ref{sec:sr} as $\epsilon \to 0^+$, and obtain a component of nontrivial nonnegative solutions of $(Q_\mu)$ which bifurcates from $(0,0)$ into the region $\mu > 0$. Finally, by the scaling, we go back to $(P_\lambda)$, and obtain thus a bounded component of nontrivial nonnegative solutions which is of loop type, joins $(0,0)$ to itself, and lies in the region $\lambda > 0$, as shown in Figure \ref{fig16_0730e}. In Section \ref{sec:a<0}, we carry out a further analysis for the loop of nontrivial nonnegative solutions of $(P_\lambda)$ obtained in the case \eqref{prehypo} by \cite{RQU}. The analysis concentrates on the direction of the bifurcation point from which the loop emanates. The main result of this section is Theorem \ref{thm:dir}. \newline

\section{Scaling and regularization schemes}  \label{sec:sr}

We set $v = \lambda^{-\frac{1}{p-q}}u$ and $\mu = \lambda^{\frac{p-2}{p-q}}$ for $\lambda > 0$, so that $(P_\lambda)$ is transformed into
\[
\begin{cases}
-\Delta v = \mu \left(
b(x) v^{q-1} + a(x) v^{p-1} \right) & \mbox{in $\Omega$}, \\
\dfrac{\partial v}{\partial \mathbf{n}} = 0 & \mbox{on $\partial \Omega$},
\end{cases} \leqno{(Q_\mu)}
\]
where $\mu \geq 0$.
We note that the nonlinearity in $(Q_\mu)$ is {\bf not} differentiable at $v=0$, so that the local and global bifurcation theory from simple eigenvalues on the trivial line
\[
\Gamma_0 = \{ (\mu, 0) : \mu \geq 0\}.
\]
can not be directly applied  to $(Q_\mu)$. To overcome this difficulty, we shall consider the following regularized version of $(Q_\mu)$, where $\epsilon\in (0,1]$ is fixed:
\[
\begin{cases}
-\Delta v = \mu \left(
b(x) (v+\epsilon)^{q-2}v + a(x) v^{p-1} \right) & \mbox{in $\Omega$}, \\
\dfrac{\partial v}{\partial \mathbf{n}} = 0 & \mbox{on $\partial \Omega$}.
\end{cases} \leqno{(Q_{\mu, \epsilon})}
\]
It is understood that $(Q_{\mu, \epsilon}) = (Q_\mu)$ when $\epsilon = 0$. It is clear that, in addition to $\Gamma_{0}$, $(Q_{\mu, \epsilon})$ has the trivial line of positive solutions
\[
\Gamma_{00} = \{ (0, c) : c \ \mbox{is a non-negative constant} \}.
\]
Furthermore, by the strong maximum principle and the boundary point lemma, any nontrivial non-negative solution of $(Q_{\mu, \epsilon})$ is positive.

First, we discuss bifurcation from $\Gamma_{00}$.
\begin{lemma} \label{lem:bf00}
Let $\epsilon \in (0,1]$. Under the conditions of Theorem \ref{tp}, we have
the following:
\begin{enumerate}
\item Assume that $\int_\Omega a > 0$. Let $(\mu_n, v_n)$ be positive solutions of $(Q_{\mu_n, \epsilon})$ with  $\mu_n > 0$ and $(\mu_n, v_n)$ converging to $(0,c)$
in $\mathbb{R}\times C(\overline{\Omega})$ for some positive constant $c$.
Then $c=c^*_\epsilon$, where $c^*_\epsilon$ is the unique positive solution of the equation
\begin{align} \label{eq:c}
c^{p-2}(c+\epsilon)^{2-q} = \frac{-\int_\Omega b}{\int_\Omega a}.
\end{align}
Moreover,
\begin{align} \label{cepto0}
c^*_\epsilon \longrightarrow c^*_0 \quad \mbox{ as } \epsilon \to 0^+.
\end{align}
\item Assume that $\int_\Omega a = 0$. Then, there are no positive solutions $v_n$ of $(Q_{\mu_n, \epsilon})$ such that $\mu_n \to 0^+$, and $v_n \to c$ in $C(\overline{\Omega})$ for some constant $c > 0$.
\end{enumerate}
\end{lemma}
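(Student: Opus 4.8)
The plan is to establish both parts by integrating the equation $(Q_{\mu,\epsilon})$ over $\Omega$ and passing to the limit. For part (1), let $(\mu_n, v_n)$ be positive solutions with $\mu_n>0$ and $(\mu_n, v_n)\to(0,c)$ in $\mathbb{R}\times C(\overline{\Omega})$ with $c>0$. First I would integrate the PDE over $\Omega$; since $\partial v_n/\partial\mathbf{n}=0$ on $\partial\Omega$, the left-hand side vanishes, giving
\[
\int_\Omega b(x)(v_n+\epsilon)^{q-2}v_n\,dx + \int_\Omega a(x)v_n^{p-1}\,dx = 0
\]
for every $n$ (the factor $\mu_n>0$ can be divided out). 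Now I would pass to the limit $n\to\infty$ using uniform convergence $v_n\to c$ on $\overline{\Omega}$: since $c>0$ is a constant and the maps $t\mapsto (t+\epsilon)^{q-2}t$ and $t\mapsto t^{p-1}$ are continuous, the integrands converge uniformly, yielding
\[
c(c+\epsilon)^{q-2}\int_\Omega b + c^{p-1}\int_\Omega a = 0.
\]
Dividing by $c^{p-1}(c+\epsilon)^{q-2}>0$ and rearranging gives exactly $c^{p-2}(c+\epsilon)^{2-q} = -\int_\Omega b/\int_\Omega a$, which is \eqref{eq:c}. To see that this equation has a unique positive solution $c^*_\epsilon$, I would note that the function $g_\epsilon(c):=c^{p-2}(c+\epsilon)^{2-q}$ is strictly increasing on $(0,\infty)$ (both factors are positive and increasing, using $p>2$ and $q<2$), with $g_\epsilon(0^+)=0$ and $g_\epsilon(+\infty)=+\infty$; since $-\int_\Omega b/\int_\Omega a > 0$ under \eqref{160730hypo} with $\int_\Omega a>0$, strict monotonicity gives existence and uniqueness of $c^*_\epsilon$.

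For the limit \eqref{cepto0}, I would argue by continuity and monotonicity in $\epsilon$. Setting $K := -\int_\Omega b/\int_\Omega a > 0$, each $c^*_\epsilon$ solves $g_\epsilon(c^*_\epsilon)=K$. Since $g_\epsilon(c)$ is continuous and strictly increasing in $\epsilon\in[0,1]$ for fixed $c>0$ (as $q<2$ makes $(c+\epsilon)^{2-q}$ increasing in $\epsilon$), a standard implicit-function / monotonicity argument shows $\epsilon\mapsto c^*_\epsilon$ is continuous (indeed monotone decreasing) on $[0,1]$; in particular $c^*_\epsilon\to c^*_0$ as $\epsilon\to0^+$. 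Alternatively one can obtain uniform bounds $0 < \underline{c} \le c^*_\epsilon \le \overline{c} < \infty$ independent of $\epsilon\in[0,1]$ directly from the equation, extract a convergent subsequence, and pass to the limit in $g_\epsilon(c^*_\epsilon)=K$ to identify every limit point as the unique root $c^*_0$ of $g_0$, forcing full convergence.

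For part (2), assume $\int_\Omega a = 0$ and suppose, for contradiction, that there exist positive solutions $v_n$ of $(Q_{\mu_n,\epsilon})$ with $\mu_n\to0^+$ and $v_n\to c$ in $C(\overline{\Omega})$ for some constant $c>0$. Running the same integration-and-limit argument as above, the term $c^{p-1}\int_\Omega a$ now vanishes, so we are left with $c(c+\epsilon)^{q-2}\int_\Omega b = 0$. But $c>0$, $(c+\epsilon)^{q-2}>0$, and $\int_\Omega b < 0$ by \eqref{160730hypo}, so the left-hand side is strictly negative — a contradiction. This rules out such sequences.

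The argument is essentially elementary; the only point requiring a little care — and hence the main (mild) obstacle — is justifying the continuity/convergence claim \eqref{cepto0} rigorously, i.e. making sure the root $c^*_\epsilon$ stays in a fixed compact subinterval of $(0,\infty)$ as $\epsilon\to0^+$ so that the limit can be identified with the unique root of the limiting equation. This is handled by the monotonicity of $g_\epsilon$ in both $c$ and $\epsilon$ together with the explicit bounds coming from $g_\epsilon(c^*_\epsilon)=K$, which trap $c^*_\epsilon$ between the roots of $c^{p-2}c^{2-q}=K$ and $c^{p-2}(c+1)^{2-q}=K$.
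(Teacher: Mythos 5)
Your proposal is correct and follows essentially the same route as the paper: integrate the equation over $\Omega$, use the Neumann condition to kill the left-hand side, divide by $\mu_n>0$, and pass to the uniform limit to obtain \eqref{eq:c} (and, for part (2), the contradiction $0=c(c+\epsilon)^{q-2}\int_\Omega b<0$). The extra details you supply on the strict monotonicity of $c\mapsto c^{p-2}(c+\epsilon)^{2-q}$ and on the convergence $c^*_\epsilon\to c^*_0$ are exactly the points the paper declares "trivial," and your justification of them is sound.
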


\begin{proof}
\strut
\begin{enumerate}
\item The divergence theorem shows that
\begin{align} \label{divvn}
0 = \frac{1}{\mu_n} \int_{\partial \Omega} \left( -\frac{\partial v_n}{\partial \mathbf{n}} \right)
= \frac{1}{\mu_n} \int_\Omega \left( -\Delta v_n\right) =
\int_\Omega \left\{ b(v_n + \epsilon)^{q-2} v_n + a v_n^{p-1} \right\}.
\end{align}
By passing to the limit as $n\to \infty$, it follows that $c$ satisfies \eqref{eq:c}.
Thus, $c=c^*_0$.  Moreover, assertion \eqref{cepto0} is verified in a trivial way.\\
\item If not, then, in the same way as in \eqref{divvn}, we deduce that
$0= c(c+\epsilon)^{q-2}\int_\Omega b < 0$, a contradiction.
\end{enumerate}
\end{proof}

\begin{remark} \label{rem:lem4}
The assertions of Lemma \ref{lem:bf00} except \eqref{cepto0} are also valid for $\epsilon = 0$.
\end{remark}

Now, using \cite[Theorem 1.7]{CR71}, we carry out a local bifurcation analysis for $(Q_{\mu, \epsilon})$ with $\epsilon > 0$ on $\Gamma_0$, where $\mu$ is the bifurcation parameter. To this end, we reduce $(Q_{\mu, \epsilon})$ to an operator equation in $C(\overline{\Omega})$.
Let $M > 0$ be fixed. Given $f \in C^\theta (\overline{\Omega})$, $\theta \in (0,1)$, let $v \in C^{2+\theta} (\overline{\Omega})$ be the solution of
\begin{align} \label{pN}
\begin{cases}
(-\Delta + M)v = f(x) & \mbox{ in } \Omega, \\
\dfrac{\partial v}{\partial \mathbf{n}} = 0 & \mbox{ on } \partial \Omega.
\end{cases}
\end{align}
We introduce the resolvent
$\mathcal{K} : C^\theta (\overline{\Omega}) \to
C^{2+\theta}_N (\overline{\Omega}) := \{ v \in C^{2+\theta}(\overline{\Omega}) : \frac{\partial v}{\partial \mathbf{n}} = 0 \mbox{ on } \partial \Omega \}$ for \eqref{pN}, i.e. $\mathcal{K}f=v$. It is well known (cf. \cite{GT01}) that $\mathcal{K}$ is bijective and homeomorphic. It is also well known (cf. \cite{Am76}) that $\mathcal{K}$ can be extended to a compact linear mapping from $C(\overline{\Omega})$ into $C^{1}(\overline{\Omega})$. In this way, as far as non-negative solutions are concerned, $(Q_{\mu, \epsilon})$ is reduced to
\begin{align*}
\mathcal{F}(\mu, v) :=
v - \mathcal{K}\left( Mv + \mu (b \epsilon^{q-2} v + g(x,v)) \right) = 0 \quad \mbox{ in }
C(\overline{\Omega}),
\end{align*}
where $g(x,s) = b(x)\{ (s+\epsilon)^{q-2}s - \epsilon^{q-2}s \} + a(x)|s|^{p-2}s$. We note that $g(x, \cdot)$ is $C^1$ at $s=0$, and $\frac{\partial g}{\partial s}(x,0)=0$, so that $\mathcal{F}$ has Fr\'echet derivatives $\mathcal{F}_v(\mu, 0)$ and $\mathcal{F}_{\mu v}(\mu, 0)$  given, respectively, by
\begin{align*}
& \mathcal{F}_v(\mu, 0) \varphi = \varphi - \mathcal{K}(M \varphi + \mu \epsilon^{q-2}b \varphi), \\
& \mathcal{F}_{\mu v}(\mu, 0) \varphi
= - \mathcal{K}(\epsilon^{q-2}b \varphi).
\end{align*}
We consider the eigenvalue problem
\begin{align} \label{160729epr}
\begin{cases}
-\Delta \varphi = \mu \epsilon^{q-2} b(x) \varphi & \mbox{in $\Omega$}, \\
\dfrac{\partial \varphi}{\partial \mathbf{n}} = 0 & \mbox{on $\partial
\Omega$},
\end{cases}
\end{align}
where $\epsilon$ is fixed, and $\mu$ is the eigenvalue parameter.
Since $b$ changes sign and $\int_\Omega b < 0$, this problem has exactly two principal eigenvalues, $\mu = 0$, and $\mu=\mu_\epsilon>0$ (cf. \cite{BL80}), which are both simple and possess positive eigenfunctions 	
$\varphi = 1$ and $\varphi=\varphi_\epsilon$, respectively, both satisfying $\|\varphi\|_\infty=1$.
Hence, the principal eigenvalues $\mu = 0$, and $\mu=\mu_\epsilon$ satisfy
\begin{align*}
\mathcal{N}(\mathcal{F}_v(0, 0)) = \langle 1 \rangle, \quad \mathcal{N}(\mathcal{F}_v(\mu_\epsilon, 0)) = \langle \varphi_\epsilon \rangle,
\end{align*}
and
\begin{align*}
\mathcal{F}_{\mu v}(0, 0) 1 \not\in \mathcal{R}(\mathcal{F}_v(0,0)), \quad
\mathcal{F}_{\mu v}(\mu_\epsilon, 0) \varphi_\epsilon \not\in \mathcal{R}(\mathcal{F}_v(\mu_\epsilon, 0)),
\end{align*}
where $\mathcal{N}(\cdot)$ and $\mathcal{R}(\cdot)$ denote the kernel and range of mappings, respectively. Indeed, the latter two assertions are verified using the conditions  that $\int_\Omega b < 0$ and $\int_\Omega b \varphi_\epsilon^2 > 0$, respectively. The local bifurcation theory 	
\cite[Theorem 1.7]{CR71} can now be applied.  Moreover, by the unilateral global bifurcation theory (\cite[Theorem 1.1]{Um10}, see also Lopez-Gomez \cite[Theorem 6.4.3]{LG01}), we infer that 	
$(Q_{\mu, \epsilon})$ has a component $\mathcal{C}_\epsilon = \{ (\mu, u)\}$ of non-negative solutions bifurcating at $(\mu_\epsilon, 0)$ in $\mathbb{R} \times C(\overline{\Omega})$, and there are no positive solutions of $(Q_{\mu, \epsilon})$ bifurcating at $(0,0)$, except $\Gamma_{00}$.
Moreover, $\mathcal{C}_\epsilon$ has the following properties:

\begin{lemma} \label{lem:Cep}
Let $\epsilon \in (0,1]$. Under the conditions in Theorem \ref{tp}, we have the following:
\begin{enumerate}
\item[(i)] There exists $\Lambda > 0$ such that $(Q_{\mu, \epsilon})$ has no positive solution for $\mu > \frac{\Lambda}{2}$.
Here, $\Lambda$ does not depend on $\epsilon \in (0,1]$.
%
%
\item[(ii)]
\begin{enumerate}
\item Assume that $\int_\Omega a > 0$. Then either $\mathcal{C}_\epsilon$ meets $\Gamma_{00}$ at $(0, c^*_\epsilon)$ or it does not meet $\Gamma_{00}$. In the later case,  $\mathcal{C}_\epsilon$ bifurcates from infinity. Moreover, $\mathcal{C}_\epsilon$ does not meet any $(0, c)$, except if $c=c^*_\epsilon$. 
\item Assume that $\int_\Omega a = 0$. Then $\mathcal{C}_\epsilon$ does not meet $\Gamma_{00}$. Consequently, $\mathcal{C}_\epsilon$ bifurcates from infinity.
\end{enumerate}
\item[(iii)] $\mathcal{C}_\epsilon$ does not meet any $(\mu, 0)$, except if $\mu = 0$ or $\mu= \mu_\epsilon$.
In particular, $\mathcal{C}_\epsilon$ meets $(0,0)$ if and only if $\mathcal{C}_\epsilon$ meets $(0, c^*_\epsilon)$.  Consequently, $\mathcal{C}_\epsilon \setminus \{ (0,0), (\mu_\epsilon, 0)\}$ is composed by positive solutions of $(Q_{\mu, \epsilon})$.
\end{enumerate}
Possible bifurcation diagrams for $(Q_{\mu, \epsilon})$ are depicted  in Figures \ref{fig16_0730ab} and \ref{fig16_0730aa} for the cases $\int_\Omega a > 0$ and
$\int_\Omega a = 0$, respectively.
\end{lemma}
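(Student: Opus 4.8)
The plan is to prove the three assertions in the order (i), (iii), (ii), since (ii) draws on both of the others.

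\emph{Part (i): an $\epsilon$-uniform bound on $\mu$.} By $(H_0)$ we fix a ball $B=B(x_0,r)$ with $\overline B\subset\Omega$ and $\delta:=\min_{\overline B}\min\{a,b\}>0$. If $v$ is a positive solution of $(Q_{\mu,\epsilon})$, then on $B$, using $1<q<2<p$ together with $(v+\epsilon)^{q-2}\ge(v+1)^{q-2}$ for $0<\epsilon\le1$, we obtain
\[
-\Delta v=\mu\bigl(b(v+\epsilon)^{q-2}v+av^{p-1}\bigr)\ge\mu\,\delta\,v\bigl((v+1)^{q-2}+v^{p-2}\bigr)\ge\mu\,C_0\,v\quad\mbox{in }B,
\]
with $C_0:=\delta\inf_{s>0}\bigl[(s+1)^{q-2}+s^{p-2}\bigr]>0$ (the infimand is continuous and positive on $(0,\infty)$, with limits $\delta$ as $s\to0^+$ and $+\infty$ as $s\to+\infty$). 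Hence $v$ is a positive supersolution of $-\Delta-\mu C_0$ in $B$, and the maximum-principle characterization of the first Dirichlet eigenvalue $\lambda_1^D(B)$ of $-\Delta$ in $B$ gives $\mu C_0\le\lambda_1^D(B)$. Therefore $(Q_{\mu,\epsilon})$ has no positive solution for $\mu>\Lambda/2$, where $\Lambda:=2\lambda_1^D(B)/C_0$ depends only on $B,\delta,p,q$, not on $\epsilon\in(0,1]$.

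\emph{Part (iii).} Suppose $(\bar\mu,0)\in\mathcal{C}_\epsilon$; it is a limit of positive solutions $(\mu_n,v_n)$ with $v_n\to0$. Normalizing $w_n:=v_n/\|v_n\|_\infty$ and letting $n\to\infty$ (compactness of $\mathcal{K}$, and $g(x,s)=o(s)$ as $s\to0$, make the remainder term negligible) we get $w_\infty\ge0$, $\|w_\infty\|_\infty=1$, with $-\Delta w_\infty=\bar\mu\,\epsilon^{q-2}b\,w_\infty$ and Neumann data; the strong maximum principle and the boundary point lemma force $w_\infty>0$, so $\bar\mu$ is a principal eigenvalue of \eqref{160729epr}, i.e.\ $\bar\mu\in\{0,\mu_\epsilon\}$. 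Thus $\mathcal{C}_\epsilon\cap\Gamma_0\subseteq\{(0,0),(\mu_\epsilon,0)\}$, and since every nontrivial non-negative solution of $(Q_{\mu,\epsilon})$ is positive, $\mathcal{C}_\epsilon\setminus\{(0,0),(\mu_\epsilon,0)\}$ consists of positive solutions. For the equivalence, recall that by Lemma~\ref{lem:bf00}(1) — equivalently, because the solvability condition of the Lyapunov--Schmidt reduction at a point $(0,c)\in\Gamma_{00}$ is exactly \eqref{eq:c} — the only point of $\Gamma_{00}$ at which non-constant positive solutions can accumulate, hence the unique bifurcation point of $(Q_{\mu,\epsilon})$ off $\Gamma_{00}$, is $(0,c^*_\epsilon)$, whereas $(0,0)$ is the unique point where $\Gamma_{00}$ meets $\Gamma_0$. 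Since $\mathcal{C}_\epsilon$ can reach $(0,0)$ only along $\Gamma_{00}$ (no branch of non-negative solutions other than $\Gamma_{00}$ issues from $\Gamma_0$ at $(0,0)$) and $\Gamma_{00}$ is a connected set joining $(0,0)$ and $(0,c^*_\epsilon)$, the component $\mathcal{C}_\epsilon$ meets $(0,0)$ if and only if it meets $(0,c^*_\epsilon)$.

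\emph{Part (ii).} We apply the unilateral global bifurcation theorem (\cite[Theorem 1.1]{Um10}; see also \cite[Theorem 6.4.3]{LG01}) to $\mathcal{C}_\epsilon$, which bifurcates from the simple eigenvalue $(\mu_\epsilon,0)$: $\mathcal{C}_\epsilon$ is either unbounded in $\mathbb{R}\times C(\overline\Omega)$, or it returns to $\Gamma_0$ at a point $\ne(\mu_\epsilon,0)$, which by (iii) can only be $(0,0)$, i.e.\ (again by (iii)) $(0,c^*_\epsilon)\in\mathcal{C}_\epsilon$. When $\int_\Omega a>0$, this is precisely the dichotomy ``$\mathcal{C}_\epsilon$ meets $\Gamma_{00}$ at $(0,c^*_\epsilon)$'' versus ``$\mathcal{C}_\epsilon$ is unbounded''; since $\mu$ remains bounded on $\mathcal{C}_\epsilon$ by (i), being unbounded means $\|v\|_\infty\to\infty$ along $\mathcal{C}_\epsilon$, i.e.\ bifurcation from infinity, while the remaining clause (that $\mathcal{C}_\epsilon$ meets no $(0,c)$ with $c\ne c^*_\epsilon$) is again Lemma~\ref{lem:bf00}(1). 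When $\int_\Omega a=0$, Lemma~\ref{lem:bf00}(2) rules out accumulation of $\mathcal{C}_\epsilon$ on any $(0,c)$ with $c>0$, so $\mathcal{C}_\epsilon$ cannot meet $\Gamma_{00}$, and then the dichotomy together with (i) forces $\mathcal{C}_\epsilon$ to bifurcate from infinity.

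The step we expect to require the most care is the equivalence ``$(0,0)\in\mathcal{C}_\epsilon\iff(0,c^*_\epsilon)\in\mathcal{C}_\epsilon$'' used in (iii) and (ii): it interlocks the local Crandall--Rabinowitz pictures at the two non-isolated solution lines $\Gamma_0$ and $\Gamma_{00}$ with the global connectedness of $\mathcal{C}_\epsilon$, and in carrying it out one has to keep track of how much of the line of constants $\Gamma_{00}$ is absorbed into $\mathcal{C}_\epsilon$ once the two meet. The a priori bound (i) and the invocation of the global alternative in (ii) are, by comparison, routine.
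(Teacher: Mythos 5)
Your proof is correct and follows essentially the same route as the paper: part (i) is the paper's comparison with the principal Dirichlet eigenpair on a ball where $a,b>0$ (your uniform-in-$\epsilon$ bound $(v+\epsilon)^{q-2}\ge(v+1)^{q-2}$ streamlines the paper's version, which keeps the weight $a$ in the auxiliary eigenvalue problem and argues for $\mu\ge\overline{\mu}$ via a pointwise inequality in $s$), while (iii) and (ii) are, respectively, the standard normalization/principal-eigenvalue argument for \eqref{160729epr} and the unilateral global alternative combined with Lemma \ref{lem:bf00}, exactly as in the paper. One small slip: the infimand $(s+1)^{q-2}+s^{p-2}$ tends to $1$ (not $\delta$) as $s\to0^+$, which does not affect the conclusion that $C_0>0$.
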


\begin{proof}\strut
\begin{enumerate} 		
\item[(i)] The proof is similar to the one of \cite[Proposition 6.1]{RQU}, so we provide an outline of it. By $(H_0)$, we can choose an open ball $B$ centered at $x_0$ such that $\overline{B} \subset \Omega^{a}_{+} \cap \Omega^{b}_{+}$. We consider the Dirichlet eigenvalue problem
\begin{align} \label{eprDB}
\begin{cases}
-\Delta w = \mu a(x) w & \mbox{ in } B, \\
w = 0 & \mbox{ on } \partial B.
\end{cases}
\end{align}
Let $w_D \in C^2(\overline{B})$ be a positive eigenfunction associated with the first eigenvalue $\mu_D > 0$ of \eqref{eprDB}.  We extend $w_D$ to the  $\Omega$ by setting $w_D = 0$ in $\Omega\setminus \overline{B}$. Then, $w_D \in H^1(\Omega)$.

Let $v$ be a positive solution of $(Q_{\mu, \epsilon})$. By the divergence theorem, we deduce that $\int_B \nabla \cdot \left( v\nabla w_D \right) = \int_{\partial B} v \frac{\partial w_D}{\partial \mathbf{n}} < 0$. It follows that
\begin{align*}
\int_B \nabla v \nabla w_D - \mu_D \int_B a v w_D < 0.
\end{align*}
On the other hand,  by the definition of $v$, we see that
\begin{align*}
\int_B \nabla v \nabla w_D = \mu \int_B a v^{p-1} w_D + \mu \int_B b (v+\epsilon)^{q-2}v w_D.
\end{align*}
If $\mu \geq 1$, then we deduce that
\begin{align*}
0 & > \int_B v^{q-1} w_D  \left\{ \mu a v^{p-q} + \mu b \left( \frac{v}{v+\epsilon} \right)^{2-q} - \mu_D av^{2-q} \right\} \\
& \geq \int_B v^{q-1} w_D  \left\{ a v^{p-q} + \mu b \left( \frac{v}{v+\epsilon} \right)^{2-q} - \mu_D av^{2-q} \right\}.
\end{align*}
The rest of the proof follows as in \cite[Proposition 6.1]{RQU}.
Indeed, we can show that there exists $\overline{\mu} > 0$ such that if $\mu \geq \overline{\mu}$, $\epsilon \in (0, 1]$, $x\in B$ and $s\geq 0$, then
\begin{align*}
a(x) s^{p-q} + \mu b(x) \left( \frac{s}{s + \epsilon} \right)^{2-q} - \mu_D a(x) s^{2-q} \geq 0.
\end{align*}
Consequently, $\mu$ is bounded from above, uniformly in $\epsilon \in (0,1]$.
\item[(ii)] If $\int_\Omega a > 0$, then, thanks to the previous item,
we infer by Lemma \ref{lem:bf00} (i) that $\mathcal{C}_\epsilon$ bifurcates
from infinity if it does not meet $(0, c_\epsilon^*)$,
so assertion (ii)(a) follows.
Similarly, if $\int_\Omega a = 0$, then the previous item and Lemma \ref{lem:bf00} (ii) yield that $\mathcal{C}_\epsilon$ bifurcates from infinity,
so assertion (ii)(b) follows.

%
\item[(iii)] This
assertion is deduced from the fact that \eqref{160729epr} has exactly two principal eigenvalues $\mu = 0$ and $\mu=\mu_\epsilon$.
\end{enumerate}
\end{proof}

	\begin{figure}[!htb]
      \begin{center}
      \subfigure[$\mathcal{C}_\epsilon$ meets $\Gamma_{00}$ at $(0, c^*_\epsilon)$,
and $\mathcal{C}_\epsilon \setminus (\Gamma_0 \cup \Gamma_{00})$ is bounded.]{
{\unitlength 0.1in
\begin{picture}( 18.2500, 17.3700)( 10.4000,-21.3000)
%
\special{pn 20}%
\special{pa 2076 2130}%
\special{pa 2076 598}%
\special{fp}%
\special{sh 1}%
\special{pa 2076 598}%
\special{pa 2056 664}%
\special{pa 2076 650}%
\special{pa 2096 664}%
\special{pa 2076 598}%
\special{fp}%
%
\special{pn 8}%
\special{pa 1898 1900}%
\special{pa 2866 1900}%
\special{fp}%
\special{sh 1}%
\special{pa 2866 1900}%
\special{pa 2798 1880}%
\special{pa 2812 1900}%
\special{pa 2798 1920}%
\special{pa 2866 1900}%
\special{fp}%
\put(28.9200,-17.7100){\makebox(0,0){$\mu$}}%
\put(19.5500,-20.1200){\makebox(0,0){O}}%
\put(20.7500,-4.7300){\makebox(0,0){$\| v \|_{C(\overline{\Omega})}$}}%
%
\special{pn 8}%
\special{pa 2676 1900}%
\special{pa 2676 620}%
\special{dt 0.045}%
\put(26.7500,-20.1800){\makebox(0,0){$\Lambda$}}%
\put(23.3500,-19.9500){\makebox(0,0){$\mu_\epsilon$}}%
%
\special{pn 20}%
\special{pa 2380 1900}%
\special{pa 2384 1868}%
\special{pa 2388 1838}%
\special{pa 2394 1806}%
\special{pa 2398 1774}%
\special{pa 2404 1742}%
\special{pa 2408 1712}%
\special{pa 2420 1648}%
\special{pa 2434 1584}%
\special{pa 2442 1554}%
\special{pa 2450 1522}%
\special{pa 2456 1490}%
\special{pa 2464 1426}%
\special{pa 2466 1394}%
\special{pa 2466 1362}%
\special{pa 2462 1330}%
\special{pa 2456 1300}%
\special{pa 2446 1268}%
\special{pa 2434 1236}%
\special{pa 2418 1206}%
\special{pa 2400 1178}%
\special{pa 2380 1150}%
\special{pa 2360 1126}%
\special{pa 2336 1102}%
\special{pa 2310 1082}%
\special{pa 2284 1064}%
\special{pa 2256 1048}%
\special{pa 2226 1036}%
\special{pa 2196 1026}%
\special{pa 2164 1016}%
\special{pa 2134 1010}%
\special{pa 2070 998}%
\special{fp}%
\put(19.3800,-10.0400){\makebox(0,0){$c^*_\epsilon$}}%
\put(19.3800,-14.3400){\makebox(0,0){$\Gamma_{00}$}}%
\put(24.7600,-10.5700){\makebox(0,0){$\mathcal{C}_\epsilon$}}%
\put(25.4000,-18.1000){\makebox(0,0){$\Gamma_0$}}%
\end{picture}}%
          \label{fig16_0730b}
      }
      \hfill
      \subfigure[$\mathcal{C}_\epsilon$ meets $\Gamma_{00}$ at $c=c^*_\epsilon$, and  bifurcation from infinity occurs.]{
{\unitlength 0.1in
\begin{picture}( 17.8300, 17.5500)(  9.6100,-26.1000)
%
\special{pn 20}%
\special{pa 1996 2610}%
\special{pa 1996 1062}%
\special{fp}%
\special{sh 1}%
\special{pa 1996 1062}%
\special{pa 1976 1128}%
\special{pa 1996 1114}%
\special{pa 2016 1128}%
\special{pa 1996 1062}%
\special{fp}%
%
\special{pn 8}%
\special{pa 1830 2380}%
\special{pa 2744 2380}%
\special{fp}%
\special{sh 1}%
\special{pa 2744 2380}%
\special{pa 2678 2360}%
\special{pa 2692 2380}%
\special{pa 2678 2400}%
\special{pa 2744 2380}%
\special{fp}%
\put(27.6900,-22.4600){\makebox(0,0){$\mu$}}%
\put(18.8400,-24.9200){\makebox(0,0){O}}%
\put(19.9600,-9.3500){\makebox(0,0){$\| v \|_{C(\overline{\Omega})}$}}%
%
\special{pn 8}%
\special{pa 2564 2380}%
\special{pa 2564 1086}%
\special{dt 0.045}%
\put(25.6400,-24.9600){\makebox(0,0){$\Lambda$}}%
\put(22.4200,-24.7200){\makebox(0,0){$\mu_\epsilon$}}%
%
\special{pn 20}%
\special{pa 2284 2380}%
\special{pa 2292 2316}%
\special{pa 2298 2284}%
\special{pa 2302 2252}%
\special{pa 2306 2222}%
\special{pa 2312 2190}%
\special{pa 2316 2158}%
\special{pa 2328 2094}%
\special{pa 2336 2064}%
\special{pa 2342 2032}%
\special{pa 2350 2000}%
\special{pa 2356 1968}%
\special{pa 2360 1936}%
\special{pa 2364 1906}%
\special{pa 2366 1874}%
\special{pa 2366 1842}%
\special{pa 2364 1810}%
\special{pa 2358 1776}%
\special{pa 2350 1744}%
\special{pa 2338 1714}%
\special{pa 2324 1682}%
\special{pa 2308 1654}%
\special{pa 2290 1626}%
\special{pa 2270 1600}%
\special{pa 2248 1574}%
\special{pa 2196 1534}%
\special{pa 2170 1518}%
\special{pa 2110 1494}%
\special{pa 2080 1486}%
\special{pa 2048 1478}%
\special{pa 2016 1472}%
\special{pa 1992 1468}%
\special{fp}%
\put(18.6600,-14.7300){\makebox(0,0){$c^*_\epsilon$}}%
\put(18.6600,-19.0700){\makebox(0,0){$\Gamma_{00}$}}%
\put(24.5300,-16.6300){\makebox(0,0){$\mathcal{C}_\epsilon$}}%
\put(24.5600,-22.9000){\makebox(0,0){$\Gamma_0$}}%
%
\special{pn 20}%
\special{pa 2322 1670}%
\special{pa 2338 1640}%
\special{pa 2352 1610}%
\special{pa 2366 1578}%
\special{pa 2374 1548}%
\special{pa 2380 1518}%
\special{pa 2380 1490}%
\special{pa 2374 1460}%
\special{pa 2364 1432}%
\special{pa 2348 1402}%
\special{pa 2332 1374}%
\special{pa 2296 1318}%
\special{pa 2280 1288}%
\special{pa 2266 1258}%
\special{pa 2256 1228}%
\special{pa 2246 1196}%
\special{pa 2240 1166}%
\special{pa 2232 1102}%
\special{pa 2230 1070}%
\special{pa 2230 1062}%
\special{fp}%
\end{picture}}%
       \label{fig17_0309b}
      }
      \hfill
      \subfigure[$\mathcal{C}_\epsilon$ does not meet $\Gamma_{00}$, but bifurcation from infinity occurs.]{
{\unitlength 0.1in
\begin{picture}( 17.3000, 16.7400)( 14.4800,-21.9000)
%
\special{pn 8}%
\special{pa 2484 2190}%
\special{pa 2484 716}%
\special{fp}%
\special{sh 1}%
\special{pa 2484 716}%
\special{pa 2464 782}%
\special{pa 2484 768}%
\special{pa 2504 782}%
\special{pa 2484 716}%
\special{fp}%
%
\special{pn 8}%
\special{pa 2328 1970}%
\special{pa 3178 1970}%
\special{fp}%
\special{sh 1}%
\special{pa 3178 1970}%
\special{pa 3112 1950}%
\special{pa 3126 1970}%
\special{pa 3112 1990}%
\special{pa 3178 1970}%
\special{fp}%
\put(32.0100,-18.4400){\makebox(0,0){$\mu$}}%
\put(23.7900,-20.7600){\makebox(0,0){O}}%
\put(24.8300,-5.9600){\makebox(0,0){$\| v \|_{C(\overline{\Omega})}$}}%
%
\special{pn 20}%
\special{pa 2724 1970}%
\special{pa 2748 1912}%
\special{pa 2772 1852}%
\special{pa 2782 1822}%
\special{pa 2794 1792}%
\special{pa 2804 1760}%
\special{pa 2820 1696}%
\special{pa 2824 1664}%
\special{pa 2824 1632}%
\special{pa 2816 1602}%
\special{pa 2804 1576}%
\special{pa 2786 1550}%
\special{pa 2746 1498}%
\special{pa 2726 1470}%
\special{pa 2708 1442}%
\special{pa 2692 1412}%
\special{pa 2682 1380}%
\special{pa 2674 1348}%
\special{pa 2672 1316}%
\special{pa 2672 1286}%
\special{pa 2678 1254}%
\special{pa 2688 1224}%
\special{pa 2700 1194}%
\special{pa 2714 1164}%
\special{pa 2730 1134}%
\special{pa 2746 1106}%
\special{pa 2760 1076}%
\special{pa 2774 1048}%
\special{pa 2784 1018}%
\special{pa 2790 988}%
\special{pa 2792 958}%
\special{pa 2792 928}%
\special{pa 2788 896}%
\special{pa 2780 866}%
\special{pa 2760 802}%
\special{pa 2734 738}%
\special{pa 2728 726}%
\special{fp}%
%
\special{pn 8}%
\special{pa 3012 1970}%
\special{pa 3012 738}%
\special{dt 0.045}%
\put(30.1100,-20.8300){\makebox(0,0){$\Lambda$}}%
\put(27.1300,-20.6000){\makebox(0,0){$\mu_\epsilon$}}%
\put(23.4000,-13.5100){\makebox(0,0){$\Gamma_{00}$}}%
\put(28.1200,-13.4500){\makebox(0,0){$\mathcal{C}_\epsilon$}}%
\put(29.2000,-18.9400){\makebox(0,0){$\Gamma_0$}}%
\end{picture}}%
        \label{fig16_0730a}
      }
      \end{center}
      \caption{Possible bifurcation diagrams for $\mathcal{C}_\epsilon$: the case $\int_\Omega a > 0$.}
      \label{fig16_0730ab}
    \end{figure}
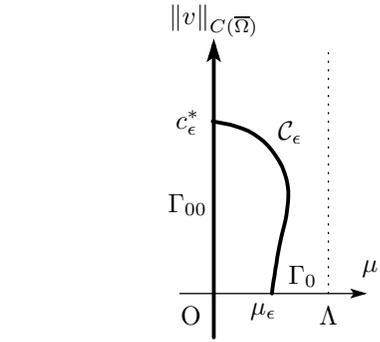
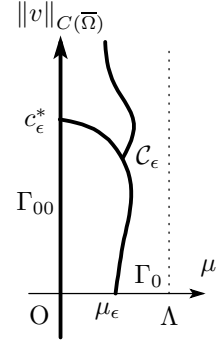
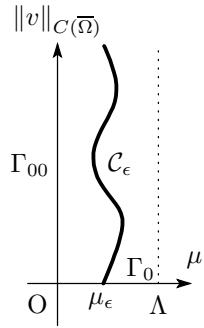


	 \begin{figure}[!htb]
  	   \begin{center}
{\unitlength 0.1in
\begin{picture}( 17.3000, 16.7400)( 14.4800,-21.9000)
%
\special{pn 8}%
\special{pa 2484 2190}%
\special{pa 2484 716}%
\special{fp}%
\special{sh 1}%
\special{pa 2484 716}%
\special{pa 2464 782}%
\special{pa 2484 768}%
\special{pa 2504 782}%
\special{pa 2484 716}%
\special{fp}%
%
\special{pn 8}%
\special{pa 2328 1970}%
\special{pa 3178 1970}%
\special{fp}%
\special{sh 1}%
\special{pa 3178 1970}%
\special{pa 3112 1950}%
\special{pa 3126 1970}%
\special{pa 3112 1990}%
\special{pa 3178 1970}%
\special{fp}%
\put(32.0100,-18.4400){\makebox(0,0){$\mu$}}%
\put(23.7900,-20.7600){\makebox(0,0){O}}%
\put(24.8300,-5.9600){\makebox(0,0){$\| v \|_{C(\overline{\Omega})}$}}%
%
\special{pn 20}%
\special{pa 2724 1970}%
\special{pa 2748 1912}%
\special{pa 2772 1852}%
\special{pa 2782 1822}%
\special{pa 2794 1792}%
\special{pa 2804 1760}%
\special{pa 2820 1696}%
\special{pa 2824 1664}%
\special{pa 2824 1632}%
\special{pa 2816 1602}%
\special{pa 2804 1576}%
\special{pa 2786 1550}%
\special{pa 2746 1498}%
\special{pa 2726 1470}%
\special{pa 2708 1442}%
\special{pa 2692 1412}%
\special{pa 2682 1380}%
\special{pa 2674 1348}%
\special{pa 2672 1316}%
\special{pa 2672 1286}%
\special{pa 2678 1254}%
\special{pa 2688 1224}%
\special{pa 2700 1194}%
\special{pa 2714 1164}%
\special{pa 2730 1134}%
\special{pa 2746 1106}%
\special{pa 2760 1076}%
\special{pa 2774 1048}%
\special{pa 2784 1018}%
\special{pa 2790 988}%
\special{pa 2792 958}%
\special{pa 2792 928}%
\special{pa 2788 896}%
\special{pa 2780 866}%
\special{pa 2760 802}%
\special{pa 2734 738}%
\special{pa 2728 726}%
\special{fp}%
%
\special{pn 8}%
\special{pa 3012 1970}%
\special{pa 3012 738}%
\special{dt 0.045}%
\put(30.1100,-20.8300){\makebox(0,0){$\Lambda$}}%
\put(27.1300,-20.6000){\makebox(0,0){$\mu_\epsilon$}}%
\put(23.4000,-13.5100){\makebox(0,0){$\Gamma_{00}$}}%
\put(28.1200,-13.4500){\makebox(0,0){$\mathcal{C}_\epsilon$}}%
\put(29.2000,-18.9400){\makebox(0,0){$\Gamma_0$}}%
\end{picture}}%
	  \caption{Possible bifurcation diagram for $\mathcal{C}_\epsilon$: the case $\int_\Omega a = 0$.}
	\label{fig16_0730aa}
	  \end{center}
	    \end{figure}

\section{Proof of Theorem \ref{tp}} \label{sec:proof}

In the sequel we study the limiting behavior of $\mathcal{C}_\epsilon$ as $\epsilon \to 0^+$. To this end, we recall some definitions. 	
Let $X$ be a complete metric space. Given $E_n \subset X$, $n\geq 1$, we set
\begin{align*}
& \liminf_{n\to \infty}E_n := \{ x \in X : \lim_{n\to \infty}{\rm dist}\, (x, E_n) = 0 \}, \\
& \limsup_{n\to \infty}E_n := \{ x \in X : \liminf_{n\to \infty}{\rm dist}\, (x, E_n) = 0 \},
\end{align*}
where ${\rm dist}\, (x, A)$ is the usual distance function for a set $A$. It is well known from Whyburn \cite[(9.12)Theorem]{Wh64} that if $\{ E_n \}$ is a sequence of connected sets in $X$ satisfying
\begin{align}
& \mbox{$\displaystyle{\liminf_{n\to \infty}}\ E_n \not= \emptyset$},  \label{linfnon} \\
& \mbox{$\displaystyle{\bigcup_{n\geq 1}} E_n$ is precompact in $X$}, \label{pcom}
\end{align}
then $\displaystyle{\limsup_{n\to \infty}}\ E_n$ is nonempty, closed, and connected.

Let $\rho > 0$, and set
\begin{align*}
\mathcal{C}_{\epsilon, \rho} := \overline{\mathcal{C}_\epsilon \cap ((0, \Lambda) \times B_\rho)},
\end{align*}
where
$B_\rho := \{ u \in C(\overline{\Omega}) : \| u \|_{C(\overline{\Omega})} \leq \rho \}$ is a closed ball in $C(\overline{\Omega})$, and $\Lambda$ is the positive constant provided by Lemma \ref{lem:Cep}(i).
Then, $\mathcal{C}_{\epsilon, \rho}$ is a bounded component satisfying
(see Figure \ref{fig17_0529abc}):
\begin{itemize}
\item $\mathcal{C}_{\epsilon, \rho}$ contains only $(\mu_\epsilon, 0)$ on $\Gamma_0$
(by Lemma \ref{lem:Cep}(iii));
\item $\mathcal{C}_{\epsilon, \rho}$ does not meet $\mu = \Lambda$ (by Lemma \ref{lem:Cep}(i)).
\item $\mathcal{C}_{\epsilon, \rho}$ does not meet $\Gamma_{00}$, except at $(0, c^*_\epsilon)$ (by Lemma \ref{lem:Cep}(ii));
\item $\mathcal{C}_{\epsilon, \rho}$ contains either $(0, c^*_\epsilon)$ or some $(\mu, v)$ such that $\mu \in (0, \Lambda)$ and $\| v \|_{C(\overline{\Omega})} = \rho$.
\end{itemize}
Letting $X = [0, \Lambda] \times B_\rho$,  $\epsilon_n \to 0^+$, and $E_n = \mathcal{C}_{\epsilon_n, \rho}$, we shall verify \eqref{linfnon} and \eqref{pcom}.

	\begin{figure}[!htb]
      \begin{center}
      \subfigure[$\mathcal{C}_{\epsilon, \rho}$ meets $\Gamma_{00}$ at $(0, c^*_\epsilon)$ but does not meet $\| v \|_{C(\overline{\Omega})} = \rho$.]{
{\unitlength 0.1in
\begin{picture}( 22.7500, 15.4100)( 13.2200,-18.9000)
%
\special{pn 8}%
\special{pa 2340 1890}%
\special{pa 2340 522}%
\special{fp}%
\special{sh 1}%
\special{pa 2340 522}%
\special{pa 2320 588}%
\special{pa 2340 574}%
\special{pa 2360 588}%
\special{pa 2340 522}%
\special{fp}%
%
\special{pn 8}%
\special{pa 2148 1700}%
\special{pa 3598 1700}%
\special{fp}%
\special{sh 1}%
\special{pa 3598 1700}%
\special{pa 3530 1680}%
\special{pa 3544 1700}%
\special{pa 3530 1720}%
\special{pa 3598 1700}%
\special{fp}%
%
\special{pn 8}%
\special{pn 8}%
\special{pa 2340 760}%
\special{pa 2348 760}%
\special{fp}%
\special{pa 2385 760}%
\special{pa 2393 760}%
\special{fp}%
\special{pa 2430 760}%
\special{pa 2438 760}%
\special{fp}%
\special{pa 2475 760}%
\special{pa 2483 760}%
\special{fp}%
\special{pa 2520 760}%
\special{pa 2528 760}%
\special{fp}%
\special{pa 2565 760}%
\special{pa 2573 760}%
\special{fp}%
\special{pa 2610 760}%
\special{pa 2618 760}%
\special{fp}%
\special{pa 2655 760}%
\special{pa 2663 760}%
\special{fp}%
\special{pa 2700 760}%
\special{pa 2708 760}%
\special{fp}%
\special{pa 2745 760}%
\special{pa 2753 760}%
\special{fp}%
\special{pa 2790 760}%
\special{pa 2798 760}%
\special{fp}%
\special{pa 2834 760}%
\special{pa 2842 760}%
\special{fp}%
\special{pa 2879 760}%
\special{pa 2887 760}%
\special{fp}%
\special{pa 2924 760}%
\special{pa 2932 760}%
\special{fp}%
\special{pa 2969 760}%
\special{pa 2977 760}%
\special{fp}%
\special{pa 3014 760}%
\special{pa 3022 760}%
\special{fp}%
\special{pa 3059 760}%
\special{pa 3067 760}%
\special{fp}%
\special{pa 3104 760}%
\special{pa 3112 760}%
\special{fp}%
\special{pa 3149 760}%
\special{pa 3157 760}%
\special{fp}%
\special{pa 3194 760}%
\special{pa 3202 760}%
\special{fp}%
\special{pa 3239 760}%
\special{pa 3247 760}%
\special{fp}%
\special{pa 3284 760}%
\special{pa 3292 760}%
\special{fp}%
\special{pa 3296 793}%
\special{pa 3296 801}%
\special{fp}%
\special{pa 3296 838}%
\special{pa 3296 846}%
\special{fp}%
\special{pa 3296 883}%
\special{pa 3296 891}%
\special{fp}%
\special{pa 3296 928}%
\special{pa 3296 936}%
\special{fp}%
\special{pa 3296 973}%
\special{pa 3296 981}%
\special{fp}%
\special{pa 3296 1018}%
\special{pa 3296 1026}%
\special{fp}%
\special{pa 3296 1063}%
\special{pa 3296 1071}%
\special{fp}%
\special{pa 3296 1108}%
\special{pa 3296 1116}%
\special{fp}%
\special{pa 3296 1153}%
\special{pa 3296 1161}%
\special{fp}%
\special{pa 3296 1198}%
\special{pa 3296 1206}%
\special{fp}%
\special{pa 3296 1242}%
\special{pa 3296 1250}%
\special{fp}%
\special{pa 3296 1287}%
\special{pa 3296 1295}%
\special{fp}%
\special{pa 3296 1332}%
\special{pa 3296 1340}%
\special{fp}%
\special{pa 3296 1377}%
\special{pa 3296 1385}%
\special{fp}%
\special{pa 3296 1422}%
\special{pa 3296 1430}%
\special{fp}%
\special{pa 3296 1467}%
\special{pa 3296 1475}%
\special{fp}%
\special{pa 3296 1512}%
\special{pa 3296 1520}%
\special{fp}%
\special{pa 3296 1557}%
\special{pa 3296 1565}%
\special{fp}%
\special{pa 3296 1602}%
\special{pa 3296 1610}%
\special{fp}%
\special{pa 3296 1647}%
\special{pa 3296 1655}%
\special{fp}%
\special{pa 3296 1692}%
\special{pa 3296 1700}%
\special{fp}%
\put(22.3600,-17.9200){\makebox(0,0){O}}%
\put(32.8900,-17.8600){\makebox(0,0){$\Lambda$}}%
\put(37.4400,-15.6600){\makebox(0,0){$\mu$}}%
\put(21.2700,-7.6000){\makebox(0,0){$\rho$}}%
\put(23.3700,-4.2900){\makebox(0,0){$\| v\|_{C(\overline{\Omega})}$}}%
%
\special{pn 20}%
\special{pa 2790 1700}%
\special{pa 2812 1640}%
\special{pa 2832 1580}%
\special{pa 2840 1548}%
\special{pa 2848 1518}%
\special{pa 2854 1486}%
\special{pa 2858 1454}%
\special{pa 2860 1420}%
\special{pa 2860 1388}%
\special{pa 2852 1320}%
\special{pa 2846 1288}%
\special{pa 2836 1254}%
\special{pa 2826 1222}%
\special{pa 2812 1192}%
\special{pa 2780 1136}%
\special{pa 2760 1112}%
\special{pa 2738 1090}%
\special{pa 2714 1072}%
\special{pa 2688 1056}%
\special{pa 2662 1042}%
\special{pa 2634 1032}%
\special{pa 2604 1022}%
\special{pa 2540 1010}%
\special{pa 2506 1006}%
\special{pa 2472 1004}%
\special{pa 2402 1000}%
\special{pa 2368 1000}%
\special{pa 2340 998}%
\special{fp}%
\put(21.2000,-10.2200){\makebox(0,0){$c^*_\epsilon$}}%
\put(30.2900,-10.9500){\makebox(0,0){$\mathcal{C}_{\epsilon, \rho}$}}%
\put(27.3600,-17.9900){\makebox(0,0){$\mu_\epsilon$}}%
\end{picture}}%
          \label{fig17_0529a}
      }
      \hfill
      \subfigure[$\mathcal{C}_{\epsilon, \rho}$ meets both $\Gamma_{00}$ at $c=c^*_\epsilon$ and $\| v \|_{C(\overline{\Omega})} = \rho$.]{
{\unitlength 0.1in
\begin{picture}( 21.7500, 15.4200)( 17.1500,-22.8000)
%
\special{pn 8}%
\special{pa 2724 2280}%
\special{pa 2724 932}%
\special{fp}%
\special{sh 1}%
\special{pa 2724 932}%
\special{pa 2704 998}%
\special{pa 2724 984}%
\special{pa 2744 998}%
\special{pa 2724 932}%
\special{fp}%
%
\special{pn 8}%
\special{pa 2548 2094}%
\special{pa 3890 2094}%
\special{fp}%
\special{sh 1}%
\special{pa 3890 2094}%
\special{pa 3824 2074}%
\special{pa 3838 2094}%
\special{pa 3824 2114}%
\special{pa 3890 2094}%
\special{fp}%
%
\special{pn 8}%
\special{pn 8}%
\special{pa 2724 1166}%
\special{pa 2732 1166}%
\special{fp}%
\special{pa 2769 1166}%
\special{pa 2777 1166}%
\special{fp}%
\special{pa 2814 1166}%
\special{pa 2822 1166}%
\special{fp}%
\special{pa 2860 1166}%
\special{pa 2868 1166}%
\special{fp}%
\special{pa 2905 1166}%
\special{pa 2913 1166}%
\special{fp}%
\special{pa 2950 1166}%
\special{pa 2958 1166}%
\special{fp}%
\special{pa 2995 1166}%
\special{pa 3003 1166}%
\special{fp}%
\special{pa 3040 1166}%
\special{pa 3048 1166}%
\special{fp}%
\special{pa 3086 1166}%
\special{pa 3094 1166}%
\special{fp}%
\special{pa 3131 1166}%
\special{pa 3139 1166}%
\special{fp}%
\special{pa 3176 1166}%
\special{pa 3184 1166}%
\special{fp}%
\special{pa 3221 1166}%
\special{pa 3229 1166}%
\special{fp}%
\special{pa 3266 1166}%
\special{pa 3274 1166}%
\special{fp}%
\special{pa 3312 1166}%
\special{pa 3320 1166}%
\special{fp}%
\special{pa 3357 1166}%
\special{pa 3365 1166}%
\special{fp}%
\special{pa 3402 1166}%
\special{pa 3410 1166}%
\special{fp}%
\special{pa 3447 1166}%
\special{pa 3455 1166}%
\special{fp}%
\special{pa 3492 1166}%
\special{pa 3500 1166}%
\special{fp}%
\special{pa 3538 1166}%
\special{pa 3546 1166}%
\special{fp}%
\special{pa 3583 1166}%
\special{pa 3591 1166}%
\special{fp}%
\special{pa 3612 1182}%
\special{pa 3612 1190}%
\special{fp}%
\special{pa 3612 1227}%
\special{pa 3612 1235}%
\special{fp}%
\special{pa 3612 1272}%
\special{pa 3612 1280}%
\special{fp}%
\special{pa 3612 1318}%
\special{pa 3612 1326}%
\special{fp}%
\special{pa 3612 1363}%
\special{pa 3612 1371}%
\special{fp}%
\special{pa 3612 1408}%
\special{pa 3612 1416}%
\special{fp}%
\special{pa 3612 1453}%
\special{pa 3612 1461}%
\special{fp}%
\special{pa 3612 1498}%
\special{pa 3612 1506}%
\special{fp}%
\special{pa 3612 1544}%
\special{pa 3612 1552}%
\special{fp}%
\special{pa 3612 1589}%
\special{pa 3612 1597}%
\special{fp}%
\special{pa 3612 1634}%
\special{pa 3612 1642}%
\special{fp}%
\special{pa 3612 1679}%
\special{pa 3612 1687}%
\special{fp}%
\special{pa 3612 1724}%
\special{pa 3612 1732}%
\special{fp}%
\special{pa 3612 1770}%
\special{pa 3612 1778}%
\special{fp}%
\special{pa 3612 1815}%
\special{pa 3612 1823}%
\special{fp}%
\special{pa 3612 1860}%
\special{pa 3612 1868}%
\special{fp}%
\special{pa 3612 1905}%
\special{pa 3612 1913}%
\special{fp}%
\special{pa 3612 1950}%
\special{pa 3612 1958}%
\special{fp}%
\special{pa 3612 1996}%
\special{pa 3612 2004}%
\special{fp}%
\special{pa 3612 2041}%
\special{pa 3612 2049}%
\special{fp}%
\special{pa 3612 2086}%
\special{pa 3612 2094}%
\special{fp}%
\put(26.2900,-21.8300){\makebox(0,0){O}}%
\put(36.0500,-21.7700){\makebox(0,0){$\Lambda$}}%
\put(40.2600,-19.6100){\makebox(0,0){$\mu$}}%
\put(25.2700,-11.6600){\makebox(0,0){$\rho$}}%
\put(27.3000,-8.1800){\makebox(0,0){$\| v\|_{C(\overline{\Omega})}$}}%
%
\special{pn 20}%
\special{pa 3142 2094}%
\special{pa 3152 2064}%
\special{pa 3162 2032}%
\special{pa 3172 2002}%
\special{pa 3188 1940}%
\special{pa 3196 1910}%
\special{pa 3204 1846}%
\special{pa 3206 1812}%
\special{pa 3206 1780}%
\special{pa 3204 1746}%
\special{pa 3200 1712}%
\special{pa 3192 1678}%
\special{pa 3184 1646}%
\special{pa 3172 1614}%
\special{pa 3160 1584}%
\special{pa 3144 1556}%
\special{pa 3126 1530}%
\special{pa 3106 1506}%
\special{pa 3084 1484}%
\special{pa 3060 1468}%
\special{pa 3034 1452}%
\special{pa 3006 1440}%
\special{pa 2978 1430}%
\special{pa 2946 1422}%
\special{pa 2914 1416}%
\special{pa 2882 1412}%
\special{pa 2848 1408}%
\special{pa 2812 1406}%
\special{pa 2778 1404}%
\special{pa 2742 1402}%
\special{pa 2724 1402}%
\special{fp}%
\put(25.2100,-14.2500){\makebox(0,0){$c^*_\epsilon$}}%
\put(34.0700,-16.4000){\makebox(0,0){$\mathcal{C}_{\epsilon, \rho}$}}%
\put(30.9200,-21.9000){\makebox(0,0){$\mu_\epsilon$}}%
%
\special{pn 20}%
\special{pa 3176 1626}%
\special{pa 3200 1598}%
\special{pa 3222 1572}%
\special{pa 3242 1546}%
\special{pa 3262 1518}%
\special{pa 3280 1490}%
\special{pa 3296 1462}%
\special{pa 3308 1434}%
\special{pa 3318 1406}%
\special{pa 3322 1376}%
\special{pa 3324 1346}%
\special{pa 3322 1316}%
\special{pa 3316 1286}%
\special{pa 3308 1254}%
\special{pa 3288 1190}%
\special{pa 3276 1160}%
\special{fp}%
\put(32.8400,-10.2200){\makebox(0,0){$(\mu, v)$}}%
\end{picture}}%
       \label{fig17_0529b}
      }
      \hfill
      \subfigure[$\mathcal{C}_{\epsilon, \rho}$ does not meet $\Gamma_{00}$ but does meet $\| v \|_{C(\overline{\Omega})} = \rho$.]{
{\unitlength 0.1in
\begin{picture}( 21.8900, 15.2300)( 17.7000,-22.1000)
%
\special{pn 8}%
\special{pa 2786 2210}%
\special{pa 2786 880}%
\special{fp}%
\special{sh 1}%
\special{pa 2786 880}%
\special{pa 2766 946}%
\special{pa 2786 932}%
\special{pa 2806 946}%
\special{pa 2786 880}%
\special{fp}%
%
\special{pn 8}%
\special{pa 2608 2026}%
\special{pa 3960 2026}%
\special{fp}%
\special{sh 1}%
\special{pa 3960 2026}%
\special{pa 3892 2006}%
\special{pa 3906 2026}%
\special{pa 3892 2046}%
\special{pa 3960 2026}%
\special{fp}%
%
\special{pn 8}%
\special{pn 8}%
\special{pa 2786 1110}%
\special{pa 2794 1110}%
\special{fp}%
\special{pa 2831 1110}%
\special{pa 2839 1110}%
\special{fp}%
\special{pa 2876 1110}%
\special{pa 2884 1110}%
\special{fp}%
\special{pa 2921 1110}%
\special{pa 2929 1110}%
\special{fp}%
\special{pa 2966 1110}%
\special{pa 2974 1110}%
\special{fp}%
\special{pa 3011 1110}%
\special{pa 3019 1110}%
\special{fp}%
\special{pa 3056 1110}%
\special{pa 3064 1110}%
\special{fp}%
\special{pa 3101 1110}%
\special{pa 3109 1110}%
\special{fp}%
\special{pa 3146 1110}%
\special{pa 3154 1110}%
\special{fp}%
\special{pa 3191 1110}%
\special{pa 3199 1110}%
\special{fp}%
\special{pa 3236 1110}%
\special{pa 3244 1110}%
\special{fp}%
\special{pa 3281 1110}%
\special{pa 3289 1110}%
\special{fp}%
\special{pa 3326 1110}%
\special{pa 3334 1110}%
\special{fp}%
\special{pa 3371 1110}%
\special{pa 3379 1110}%
\special{fp}%
\special{pa 3416 1110}%
\special{pa 3424 1110}%
\special{fp}%
\special{pa 3461 1110}%
\special{pa 3469 1110}%
\special{fp}%
\special{pa 3506 1110}%
\special{pa 3514 1110}%
\special{fp}%
\special{pa 3551 1110}%
\special{pa 3559 1110}%
\special{fp}%
\special{pa 3596 1110}%
\special{pa 3604 1110}%
\special{fp}%
\special{pa 3641 1110}%
\special{pa 3649 1110}%
\special{fp}%
\special{pa 3678 1118}%
\special{pa 3678 1126}%
\special{fp}%
\special{pa 3678 1163}%
\special{pa 3678 1171}%
\special{fp}%
\special{pa 3678 1208}%
\special{pa 3678 1216}%
\special{fp}%
\special{pa 3678 1253}%
\special{pa 3678 1261}%
\special{fp}%
\special{pa 3678 1298}%
\special{pa 3678 1306}%
\special{fp}%
\special{pa 3678 1343}%
\special{pa 3678 1351}%
\special{fp}%
\special{pa 3678 1388}%
\special{pa 3678 1396}%
\special{fp}%
\special{pa 3678 1433}%
\special{pa 3678 1441}%
\special{fp}%
\special{pa 3678 1478}%
\special{pa 3678 1486}%
\special{fp}%
\special{pa 3678 1523}%
\special{pa 3678 1531}%
\special{fp}%
\special{pa 3678 1568}%
\special{pa 3678 1576}%
\special{fp}%
\special{pa 3678 1613}%
\special{pa 3678 1621}%
\special{fp}%
\special{pa 3678 1658}%
\special{pa 3678 1666}%
\special{fp}%
\special{pa 3678 1703}%
\special{pa 3678 1711}%
\special{fp}%
\special{pa 3678 1748}%
\special{pa 3678 1756}%
\special{fp}%
\special{pa 3678 1793}%
\special{pa 3678 1801}%
\special{fp}%
\special{pa 3678 1838}%
\special{pa 3678 1846}%
\special{fp}%
\special{pa 3678 1883}%
\special{pa 3678 1891}%
\special{fp}%
\special{pa 3678 1928}%
\special{pa 3678 1936}%
\special{fp}%
\special{pa 3678 1973}%
\special{pa 3678 1981}%
\special{fp}%
\special{pa 3678 2018}%
\special{pa 3678 2026}%
\special{fp}%
\put(26.9000,-21.1500){\makebox(0,0){O}}%
\put(36.7200,-21.0900){\makebox(0,0){$\Lambda$}}%
\put(40.9500,-18.9500){\makebox(0,0){$\mu$}}%
\put(25.8800,-11.1000){\makebox(0,0){$\rho$}}%
\put(27.8500,-7.6700){\makebox(0,0){$\| v\|_{C(\overline{\Omega})}$}}%
\put(34.7300,-15.7800){\makebox(0,0){$\mathcal{C}_{\epsilon, \rho}$}}%
\put(32.5600,-21.1700){\makebox(0,0){$\mu_\epsilon$}}%
\put(33.4900,-9.6800){\makebox(0,0){$(\mu, v)$}}%
%
\special{pn 20}%
\special{pa 3226 2024}%
\special{pa 3250 1998}%
\special{pa 3272 1972}%
\special{pa 3294 1944}%
\special{pa 3314 1918}%
\special{pa 3330 1892}%
\special{pa 3342 1864}%
\special{pa 3350 1836}%
\special{pa 3352 1808}%
\special{pa 3350 1780}%
\special{pa 3346 1752}%
\special{pa 3336 1722}%
\special{pa 3324 1692}%
\special{pa 3310 1664}%
\special{pa 3294 1634}%
\special{pa 3278 1602}%
\special{pa 3242 1542}%
\special{pa 3226 1510}%
\special{pa 3212 1480}%
\special{pa 3198 1448}%
\special{pa 3188 1418}%
\special{pa 3180 1386}%
\special{pa 3178 1356}%
\special{pa 3178 1324}%
\special{pa 3180 1294}%
\special{pa 3186 1262}%
\special{pa 3194 1230}%
\special{pa 3204 1200}%
\special{pa 3228 1138}%
\special{pa 3242 1106}%
\special{fp}%
\end{picture}}%
        \label{fig17_0529c}
      }
      \end{center}
      \caption{Three possibilities for the bounded component $\mathcal{C}_{\epsilon, \rho}$.}
      \label{fig17_0529abc}
    \end{figure}
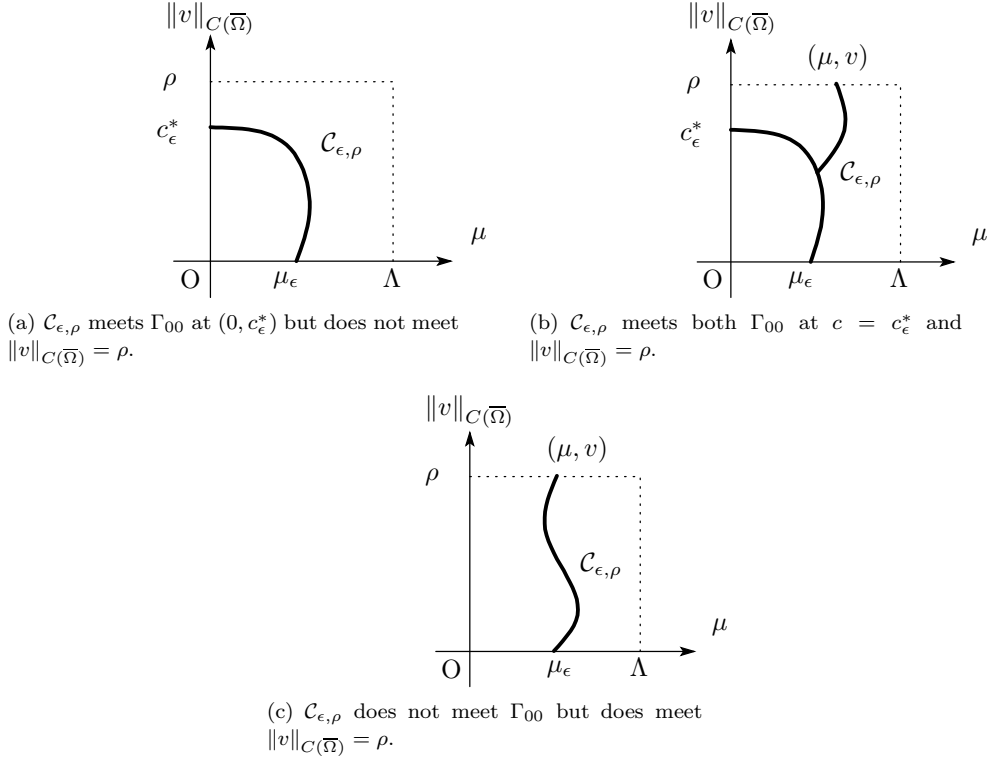

We note from \eqref{160729epr} that $\mu_1 = \mu_{\epsilon_n} \epsilon_n^{q-2}$, so that $\mu_{\epsilon_n} \to 0$. It follows that $(0,0) \in \liminf_{n\to \infty} \mathcal{C}_{\epsilon_n, \rho}$, since $(\mu_{\epsilon_n}, 0) \in \mathcal{C}_{\epsilon_n, \rho}$. In particular, we obtain assertion \eqref{linfnon}.

The boundedness of $\mathcal{C}_{\epsilon_n, \rho}$ implies that $\bigcup_{n}  \mathcal{C}_{\epsilon_n, \rho}$ is precompact. Indeed, for any $\{ (\mu_k, v_k) \} \subset \bigcup_{n} \mathcal{C}_{\varepsilon_n, \rho}$ the sequence $\epsilon_n$ has a subsequence $\epsilon_{n_k}$ such that $(\mu_k, v_k) \in \mathcal{C}_{\varepsilon_{n_k}}$, where $\epsilon_{n_k} \in (0,1]$. Then, by elliptic regularity, we deduce that $v_k \in C^2(\overline{\Omega})$,
$\Vert v_k \Vert_{C^{1}(\overline{\Omega})}$ is bounded, and
\begin{align} \label{pr:vk}
\begin{cases}
-\Delta v_k = \mu_k \left(
b(x) (v + \epsilon_{n_k})^{q-2}v_k + a(x) v_k^{p-1} \right) & \mbox{in $\Omega$}, \\
\dfrac{\partial v_k}{\partial \mathbf{n}} = 0 & \mbox{on $\partial \Omega$}.
\end{cases}
\end{align}
Since $(\mu_k, v_k)$ and $\epsilon_{n_k}$ are bounded, using the compact embedding $C^{1}(\overline{\Omega})\subset C(\overline{\Omega})$ we deduce that
$\{ (\mu_k, v_k) \}$ has a convergent subsequence in $[0, \Lambda] \times B_\rho$, . Thus, assertion \eqref{pcom} is verified.
%

We may now apply Whyburn's result, so that \cite[(9.12)Theorem]{Wh64} implies that
\begin{align} \label{defC0r}
\mathcal{C}_{0, \rho} := \limsup_{n\to \infty} \mathcal{C}_{\epsilon_n, \rho}
\end{align}
is nonempty, closed and connected, i.e., it is a nonempty component in
$[0, \Lambda] \times B_\rho$.  Moreover, we shall show that $\mathcal{C}_{0, \rho}$ consists of non-negative solutions of $(Q_\mu)$, and
\begin{align} \label{includeO}
(0,0) \in \liminf_{n\to \infty} \mathcal{C}_{\epsilon_n, \rho} \subset \mathcal{C}_{0, \rho}.
\end{align}
The proof of 	
these facts is similar to the verification of the precompactness of $\bigcup_{n}  \mathcal{C}_{\epsilon_n, \rho}$. Indeed,
given $(\mu, v) \in \mathcal{C}_{0, \rho}$,
the sequence $\epsilon_n \to 0^+$ has a subsequence, still denoted by the same notation, such that there exist $(\mu_n, v_n) \in \mathcal{C}_{\varepsilon_n, \rho}$ satisfying
$(\mu_n, v_n) \to (\mu, v)$ in $\mathbb{R} \times C(\overline{\Omega})$.
It follows, by a bootstrap argument based on elliptic regularity, that $v_n \to v$ in $C^{1}(\overline{\Omega})$, so that $v$ is a non-negative weak solution of $(Q_\mu)$ (see \eqref{pr:vk}), and eventually, a non-negative solution in $C^{2+\theta}(\overline{\Omega})$ for some $\theta \in (0,1)$, by elliptic regularity.

Next, we shall 	
prove that $\mathcal{C}_{0, \rho}$ is nontrivial, i.e., we exclude the possibility that $\mathcal{C}_{0, \rho} \subset \Gamma_0 \cup \Gamma_{00}$. Let
$\rho, M$ be such that $0<M<c^*_0 < \rho$. Then,
we find from \eqref{cepto0} and \eqref{includeO}
that $\mathcal{C}_{0, \rho}$ joins $(0,0)$ to either $(0, c^*_0)$ or $(\mu, v) \in
[0, \Lambda]\times B_\rho$. Since $\mathcal{C}_{0, \rho}$ is connected, the intermediate value theorem shows the existence of $(\mu_0, v_0) \in \mathcal{C}_{0, \rho}$ such that $\| v_0 \|_{C(\overline{\Omega})} = M$. By definition, the sequence $\epsilon_n \to 0^+$ has a subsequence, still denoted by the same notation, such that
there exist $(\mu_n, v_n) \in \mathcal{C}_{\epsilon_n, \rho}$ with $0< \mu_n \to \mu_0$ and $v_n \to v_0$ in $C(\overline{\Omega})$. Assume by contradiction that $\mu_0 = 0$. Then, $v_0 = M$, so that $v_n \to M$ in $C(\overline{\Omega})$. However,
applying the divergence theorem to the solution $v_n$ of $(Q_{\mu_n, \epsilon_n})$, we obtain
\begin{align*} 
0 = \mu_n^{-1} \int_\Omega \left( -\Delta v_n \right) =
\int_\Omega \left\{ b(x)(v_n + \epsilon_n)^{q-2} v_n + a(x) v_n^{p-1} \right\},
\end{align*}
so that
passing to the limit, we deduce that
\begin{align} \label{Mfrml}
0=M^{q-1} \int_\Omega b + M^{p-1} \int_\Omega a,
\end{align}
and thus that $M=c^*_0$, which is impossible. Consequently, $\mu_0 > 0$, and thus,
$\mathcal{C}_{0, \rho}$ is nontrivial.

Using $(H_3)$ and \cite[Proposition 3.3]{RQU}, we infer  
that the nontrivial non-negative solutions set of $(P_\lambda)$ does not meet $\Gamma_0$ at any $\lambda \neq 0$, so that neither does the one of $(Q_\mu)$. Similarly to Lemma \ref{lem:Cep}(ii) and (iii), we see
from Lemma \ref{lem:bf00} (see Remark \ref{rem:lem4}) that the nontrivial non-negative solutions set of $(Q_\mu)$ does not meet $\Gamma_{00}$ at any $c \not= c^*_0$,  and \eqref{includeO} ensures that  $\mathcal{C}_{0, \rho}$ joins $(0,0)$ to either $(0, c^*_0)$ 
or some $(\mu_1, v_1)$ such that $\mu_1 \in (0, \Lambda)$ and $\| v_1 \|_{C(\overline{\Omega})} = \rho$. Since $\rho$ is arbitrary, we obtain a component $\mathcal{C}_0^\prime$ of non-negative solutions of $(Q_\mu)$ such that (see Figure \ref{fig16_0730cd}): 
\begin{enumerate}
\item[(c1)] $\mathcal{C}_0^\prime \setminus \{ (0,0), (0, c^*_0) \}$ consists of nontrivial non-negative solutions;
\item[(c2)] $\mathcal{C}_0^\prime$ joins $(0,0)$ to either $(0, c^*_0)$ or  $(0, \infty)$.
\item[(c3)] If
$(\mu, v) \in \mathcal{C}^\prime_0$, then $\mu \geq 0$.
\end{enumerate}
Note that the second possibility in (c2) follows from the following \textit{a priori} upper bound for positive solutions of $(Q_{\mu, \epsilon})$: given $\overline{\mu} \in (0,1)$, there exists $C_{\overline{\mu}} > 0$ such that $v \leq C_{\overline{\mu}}$ on $\overline{\Omega}$ for any positive solution $v$ of $(Q_{\mu, \epsilon})$ with $\mu \in [\overline{\mu}, \overline{\mu}^{-1}]$ and $\epsilon \in (0, 1]$ (cf. \cite[Proposition 6.5]{RQU}).


	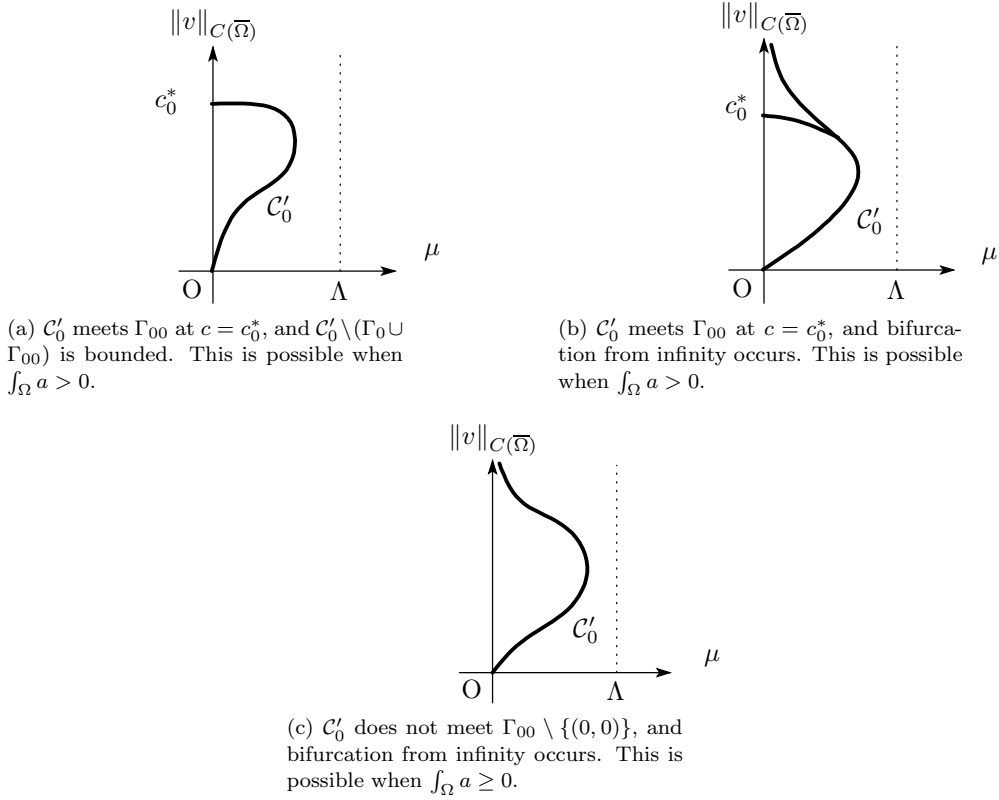
\begin{figure}[!htb]
      \begin{center}
    %
    %
    %
      %
      \subfigure[$\mathcal{C}_0^\prime$ meets $\Gamma_{00}$ at $c=c^*_0$, and
$\mathcal{C}_0^\prime \setminus (\Gamma_0 \cup \Gamma_{00})$ is bounded. This is possible when $\int_\Omega a > 0$.]{
{\unitlength 0.1in
\begin{picture}( 19.8200, 15.3400)( 12.7900,-18.7000)
%
\special{pn 8}%
\special{pa 2132 1702}%
\special{pa 3262 1702}%
\special{fp}%
\special{sh 1}%
\special{pa 3262 1702}%
\special{pa 3194 1682}%
\special{pa 3208 1702}%
\special{pa 3194 1722}%
\special{pa 3262 1702}%
\special{fp}%
%
\special{pn 8}%
\special{pa 2310 1870}%
\special{pa 2310 530}%
\special{fp}%
\special{sh 1}%
\special{pa 2310 530}%
\special{pa 2290 598}%
\special{pa 2310 584}%
\special{pa 2330 598}%
\special{pa 2310 530}%
\special{fp}%
\put(22.0000,-17.9500){\makebox(0,0){O}}%
\put(34.5600,-16.0200){\makebox(0,0){$\mu$}}%
\put(23.1400,-4.1600){\makebox(0,0){$\| v \|_{C(\overline{\Omega})}$}}%
\put(26.6300,-13.7600){\makebox(0,0){$\mathcal{C}_0^\prime$}}%
%
\special{pn 8}%
\special{pa 2976 1702}%
\special{pa 2976 558}%
\special{dt 0.045}%
\put(29.7000,-18.0500){\makebox(0,0){$\Lambda$}}%
%
\special{pn 20}%
\special{pa 2304 1702}%
\special{pa 2322 1636}%
\special{pa 2352 1540}%
\special{pa 2364 1510}%
\special{pa 2378 1480}%
\special{pa 2406 1424}%
\special{pa 2424 1398}%
\special{pa 2444 1374}%
\special{pa 2488 1330}%
\special{pa 2514 1312}%
\special{pa 2542 1292}%
\special{pa 2570 1274}%
\special{pa 2596 1258}%
\special{pa 2624 1238}%
\special{pa 2650 1218}%
\special{pa 2672 1198}%
\special{pa 2694 1174}%
\special{pa 2710 1146}%
\special{pa 2724 1116}%
\special{pa 2734 1084}%
\special{pa 2738 1050}%
\special{pa 2740 1016}%
\special{pa 2736 984}%
\special{pa 2730 952}%
\special{pa 2718 924}%
\special{pa 2702 902}%
\special{pa 2682 882}%
\special{pa 2660 866}%
\special{pa 2634 852}%
\special{pa 2606 842}%
\special{pa 2574 834}%
\special{pa 2506 826}%
\special{pa 2468 824}%
\special{pa 2350 824}%
\special{pa 2310 826}%
\special{pa 2304 826}%
\special{fp}%
\put(20.6700,-8.1200){\makebox(0,0){$c^*_0$}}%
\end{picture}}%
      \label{fig16_0730d}
     }
    \hfill
      \subfigure[$\mathcal{C}_0^\prime$ meets $\Gamma_{00}$ at $c=c^*_0$, and bifurcation from infinity occurs. This is possible when $\int_\Omega a > 0$.]{
{\unitlength 0.1in
\begin{picture}( 20.2400, 15.7200)( 13.1200,-19.2100)
%
\special{pn 8}%
\special{pa 2160 1748}%
\special{pa 3336 1748}%
\special{fp}%
\special{sh 1}%
\special{pa 3336 1748}%
\special{pa 3270 1728}%
\special{pa 3284 1748}%
\special{pa 3270 1768}%
\special{pa 3336 1748}%
\special{fp}%
%
\special{pn 8}%
\special{pa 2344 1922}%
\special{pa 2344 540}%
\special{fp}%
\special{sh 1}%
\special{pa 2344 540}%
\special{pa 2324 606}%
\special{pa 2344 592}%
\special{pa 2364 606}%
\special{pa 2344 540}%
\special{fp}%
\put(22.3000,-18.4300){\makebox(0,0){O}}%
\put(35.2300,-16.7400){\makebox(0,0){$\mu$}}%
\put(23.4700,-4.2900){\makebox(0,0){$\| v \|_{C(\overline{\Omega})}$}}%
\put(29.0100,-14.9200){\makebox(0,0){$\mathcal{C}_0^\prime$}}%
%
\special{pn 8}%
\special{pa 3040 1748}%
\special{pa 3040 568}%
\special{dt 0.045}%
\put(30.3400,-18.5400){\makebox(0,0){$\Lambda$}}%
%
\special{pn 20}%
\special{pa 2338 1746}%
\special{pa 2364 1730}%
\special{pa 2390 1712}%
\special{pa 2414 1696}%
\special{pa 2492 1642}%
\special{pa 2520 1624}%
\special{pa 2546 1604}%
\special{pa 2602 1560}%
\special{pa 2630 1536}%
\special{pa 2658 1510}%
\special{pa 2686 1486}%
\special{pa 2712 1458}%
\special{pa 2738 1432}%
\special{pa 2762 1404}%
\special{pa 2782 1376}%
\special{pa 2800 1348}%
\special{pa 2816 1320}%
\special{pa 2828 1292}%
\special{pa 2836 1264}%
\special{pa 2838 1236}%
\special{pa 2836 1208}%
\special{pa 2830 1182}%
\special{pa 2818 1156}%
\special{pa 2802 1132}%
\special{pa 2780 1106}%
\special{pa 2758 1082}%
\special{pa 2732 1058}%
\special{pa 2706 1036}%
\special{pa 2680 1012}%
\special{pa 2628 968}%
\special{pa 2602 944}%
\special{pa 2554 898}%
\special{pa 2532 876}%
\special{pa 2492 828}%
\special{pa 2472 802}%
\special{pa 2440 750}%
\special{pa 2428 722}%
\special{pa 2416 692}%
\special{pa 2406 662}%
\special{pa 2398 632}%
\special{pa 2382 568}%
\special{fp}%
\put(22.0200,-8.9400){\makebox(0,0){$c^*_0$}}%
%
\special{pn 20}%
\special{pa 2732 1054}%
\special{pa 2674 1026}%
\special{pa 2644 1014}%
\special{pa 2614 1000}%
\special{pa 2584 988}%
\special{pa 2524 968}%
\special{pa 2494 960}%
\special{pa 2430 948}%
\special{pa 2400 944}%
\special{pa 2368 942}%
\special{pa 2338 938}%
\special{fp}%
\end{picture}}%
        \label{fig17_0309}
      }
\hfill
      \subfigure[$\mathcal{C}_0^\prime$ does not meet $\Gamma_{00}\setminus \{ (0,0)\}$, and bifurcation from infinity occurs. This is possible when $\int_\Omega a \geq 0$.]{
{\unitlength 0.1in
\begin{picture}( 19.8100, 14.5700)( 12.5900,-17.9000)
%
\special{pn 8}%
\special{pa 2116 1632}%
\special{pa 3218 1632}%
\special{fp}%
\special{sh 1}%
\special{pa 3218 1632}%
\special{pa 3152 1612}%
\special{pa 3166 1632}%
\special{pa 3152 1652}%
\special{pa 3218 1632}%
\special{fp}%
%
\special{pn 8}%
\special{pa 2290 1790}%
\special{pa 2290 522}%
\special{fp}%
\special{sh 1}%
\special{pa 2290 522}%
\special{pa 2270 588}%
\special{pa 2290 574}%
\special{pa 2310 588}%
\special{pa 2290 522}%
\special{fp}%
\put(21.8200,-17.1900){\makebox(0,0){O}}%
\put(34.4000,-15.5600){\makebox(0,0){$\mu$}}%
\put(22.9400,-4.1300){\makebox(0,0){$\| v \|_{C(\overline{\Omega})}$}}%
%
\special{pn 20}%
\special{pa 2290 1632}%
\special{pa 2310 1606}%
\special{pa 2330 1582}%
\special{pa 2372 1534}%
\special{pa 2396 1510}%
\special{pa 2418 1490}%
\special{pa 2470 1450}%
\special{pa 2496 1432}%
\special{pa 2522 1416}%
\special{pa 2550 1398}%
\special{pa 2604 1364}%
\special{pa 2656 1326}%
\special{pa 2680 1306}%
\special{pa 2704 1282}%
\special{pa 2724 1256}%
\special{pa 2742 1230}%
\special{pa 2758 1200}%
\special{pa 2772 1170}%
\special{pa 2780 1140}%
\special{pa 2786 1108}%
\special{pa 2786 1076}%
\special{pa 2782 1044}%
\special{pa 2774 1012}%
\special{pa 2762 982}%
\special{pa 2746 952}%
\special{pa 2726 924}%
\special{pa 2706 900}%
\special{pa 2680 878}%
\special{pa 2654 858}%
\special{pa 2626 840}%
\special{pa 2598 824}%
\special{pa 2568 810}%
\special{pa 2510 780}%
\special{pa 2482 766}%
\special{pa 2456 748}%
\special{pa 2434 728}%
\special{pa 2412 706}%
\special{pa 2394 682}%
\special{pa 2378 656}%
\special{pa 2362 626}%
\special{pa 2350 596}%
\special{pa 2336 566}%
\special{pa 2326 536}%
\special{fp}%
\put(27.8000,-14.1000){\makebox(0,0){$\mathcal{C}_0^\prime$}}%
%
\special{pn 8}%
\special{pa 2940 1632}%
\special{pa 2940 546}%
\special{dt 0.045}%
\put(29.3400,-17.2900){\makebox(0,0){$\Lambda$}}%
\end{picture}}%
          \label{fig16_0730c}
      }
      \end{center}
      \caption{Possible bifurcation diagrams for $\mathcal{C}_0^\prime$ when $\int_\Omega a \geq 0$.}
      \label{fig16_0730cd}
    \end{figure}
We conclude now the proof of Theorem \ref{tp}. 
By the rescaling $u = \lambda^{\frac{1}{p-q}}v$, we transform the component $\mathcal{C}_0^\prime$ for $(Q_\mu)$ into a component of non-negative solutions for $(P_\lambda)$. By \cite[Lemma 6.8(1)]{RQU} we know that $(P_\lambda)$ has no nontrivial non-negative solutions for $\lambda=0$ (assertions (ii) and (iii) are thus verified). From \cite[Proposition 4.2]{RQUTMNA} the following \textit{a priori} upper bound for non-negative solutions of  $(P_{\lambda})$ holds: given $\overline{\lambda} > 0$, there exists $C_{\overline{\lambda}} > 0$ such that $u \leq C_{\overline{\lambda}}$ on $\overline{\Omega}$ for all non-negative solutions of $(P_\lambda)$ with $\lambda \in [-\overline{\lambda}, \overline{\lambda}]$.   Combining (c1) and (c2) with these two assertions provides us with  the desired component satisfying properties (i) and (iv).
The proof of Theorem \ref{tp} is now complete. \qed \newline

\begin{remark} \label{rem:thm1}
When $\int_\Omega b = 0<\int_\Omega a$, we may consider,
instead of $(Q_{\mu, \epsilon})$, the problem
\begin{align} \label{Qme2}
\begin{cases}
-\Delta v = \mu \left(
(b(x)-\epsilon) (v+\epsilon)^{q-2}v + a(x) v^{p-1} \right) & \mbox{in $\Omega$}, \\
\dfrac{\partial v}{\partial \mathbf{n}} = 0 & \mbox{on $\partial \Omega$},
\end{cases}
\end{align}
where $b(x) - \epsilon$ changes sign if $\epsilon > 0$ is small enough.
In fact, the eigenvalue problem associated with \eqref{Qme2}, as introduced in \eqref{160729epr}, possesses exactly two principal eigenvalues $0, \mu_\epsilon$,
with $\mu_\epsilon > 0$, and $\mu_\epsilon \rightarrow 0$ as $\epsilon \to 0^+$ (see \cite[Lemma 6.6]{RQU}). However, since $c^*_\epsilon \to 0$ as $\epsilon \to 0^+$, we can not exclude the possibility that $\mathcal{C}_\epsilon$ shrinks to $\{ (0,0) \}$ as $\epsilon \to 0^+$ when $\mathcal{C}_\epsilon \setminus (\Gamma_0 \cup \Gamma_{00})$ is bounded, see Figure \ref{fig16_0730b}.

Finally, in the case $\int_\Omega a = \int_\Omega b = 0$, $\mathcal{C}_\epsilon$ is provided from \eqref{Qme2} as in Figure \ref{fig16_0730aa}. Indeed, letting $v_n$ be a positive solution of \eqref{Qme2} for $\mu = \mu_n > 0$, it is not possible that $(\mu_n, v_n) \rightarrow (0, c)$ in $\mathbb{R}\times C(\overline{\Omega})$ for some constant $c\geq 0$.
However, we can not exclude the possibility that $\mathcal{C}_{0, \rho} \subset \Gamma_0 \cup \Gamma_{00}$, since \eqref{Mfrml} holds for any positive constant $M$ in this case. \newline
\end{remark}
\noindent\textit{Note added in proof.}
(i) Regarding $(H_3)$, the condition that $\Omega_-^b$ is a subdomain can be removed from Theorem \ref{tp}. Indeed, although this condition is needed to verify the non-existence of nontrivial non-negative solutions of $(P_\lambda)$ bifurcating from $\{ (\lambda, 0) \}$ for $\lambda < 0$, such verification is required only for $\lambda > 0$ in Theorem \ref{tp}.

(ii) Let $\overline{\mathcal{C}_0}$ be a maximal component of nonnegative solutions of $(P_\lambda)$ that includes the loop type component $\mathcal{C}_0$ provided by Theorem \ref{tp} and such that $\overline{\mathcal{C}_0}\setminus \{ (0,0) \}$ consists of nontrivial non-negative solutions. As a further result for Theorem \ref{tp}, we obtain that $\overline{\mathcal{C}_0}$ is bounded in $\mathbb{R}\times C(\overline{\Omega})$ if, in addition to the hypotheses in Theorem \ref{tp}, one of the following conditions is assumed.
\begin{enumerate}
\item[(a)] $\overline{\Omega_+^b}\subset \Omega$, $\Omega^{\prime\prime} := \Omega \setminus \overline{\Omega_+^b}$ is a subdomain, and $\Omega_+^a \subset \Omega_+^b$.
\item[(b)] $\Omega_+^b$ contains a tubular neighborhood of $\partial \Omega$, $\Omega^{\prime\prime}$ is a subdomain, and $\Omega_+^a \subset \Omega_+^b$.
\end{enumerate}
Indeed, under the additional condition, the strong maximum principle and boundary point lemma show that any nontrivial non-negative solution of $(Q_{\mu})$ is positive in $\Omega_+^b$. Consequently, Lemma 6 (i) is also valid for nontrivial non-negative solutions of $(Q_{\mu})$, and the desired conclusion follows.

\section{A further analysis for the case $\int_\Omega a < 0$} \label{sec:a<0}
Let us assume now condition \eqref{prehypo}. In addition, we assume $(H_0)$, $(H_3)$, and condition (b) from Theorem \ref{tp} with $\Omega' = \Omega^a_- \neq \emptyset$.
Then  $(P_\lambda)$ has a bounded loop type component of non-negative solutions $\mathcal{C}_0$ in $\mathbb{R} \times C(\overline{\Omega})$,
satisfying the following properties (see \cite[Theorem 1.6]{RQU} and Figure
\ref{fig16_0730f}):
\begin{enumerate}
\item $\mathcal{C}_0$ bifurcates at $(0,0)$ and joins $(0,0)$ to itself;
\item $\mathcal{C}_0$ is non-trivial, i.e., $\mathcal{C}_0 \neq \{(0,0)\}$.
More precisely, $\mathcal{C}_0$ contains a positive solution $u_0$ of $(P_\lambda)$ with $\lambda = 0$;
\item  The only trivial solution contained in $\mathcal{C}_0$ is $(\lambda, u)=(0,0)$, i.e., $\mathcal{C}_0$ does not contain any $(\lambda,0)$ with $\lambda \neq 0$.
\item There exists $\delta > 0$ such that $\mathcal{C}_0$ does not contain any positive solution $u$ of $(P_\lambda)$ with $\lambda = 0$ satisfying $\Vert u \Vert_{C(\overline{\Omega})} \leq \delta$.
\end{enumerate}

According to the arguments developed in \cite{RQU}, this existence result can be verified by considering the regularized version of $(P_\lambda)$ for $u^{q-1}$ at $u=0$:
$$
\begin{cases}
-\Delta u =  \lambda (b(x) - \epsilon) (u+\epsilon)^{q-2}u +a(x)u^{p-1} & \mbox{in} \ \Omega, \\
\dfrac{\partial u}{\partial \mathbf{n}} = 0 & \mbox{on} \ \partial \Omega,
\end{cases} \leqno{(P_{\lambda,\epsilon})}
$$
where $\epsilon > 0$ and $\Omega^{b-\epsilon}_+ \not= \emptyset$. Then $(P_{\lambda, \epsilon})$ is regular, so that the unilateral global bifurcation theorem by L\'opez-G\'omez \cite[Theorem 6.4.3]{LG01} may be applied. To this end, we consider the linearized problem at $u=0$:
\begin{align*} 
\begin{cases}
-\Delta \varphi = \lambda (b-\epsilon) \epsilon^{q-2} \varphi & \mbox{in} \ \Omega, \\
\dfrac{\partial \varphi}{\partial \mathbf{n}} = 0 & \mbox{on} \
\partial \Omega.
\end{cases}
\end{align*}
Since
$b-\epsilon$ changes sign and $\int_\Omega (b-\epsilon) < 0$, this eigenvalue problem has exactly two principal eigenvalues, $ \lambda=0$ and $\lambda=\lambda_\epsilon>0$, which are both simple.
We use now the unilateral global bifurcation theory to obtain two components $\mathcal{C}_{0, \epsilon}$ and $\mathcal{C}_{1, \epsilon}$ of positive solutions of $(P_{\lambda,\epsilon})$, bifurcating from $(0,0)$ and $(\lambda_\epsilon,0)$, respectively. Moreover, we can analyze the local nature of these  components at the bifurcation points by using the local bifurcation theory proposed by Crandall and Rabinowitz. We can also analyze the global nature of these components by making good use of an {\it a priori} bound in $\mathbb{R} \times C(\overline{\Omega})$ for positive solutions $(\lambda, u)$ of $(P_{\lambda, \epsilon})$. Consequently, $\mathcal{C}_{0, \epsilon}$ and $\mathcal{C}_{1, \epsilon}$ are both bounded, so that $\mathcal{C}_{0, \epsilon} = \mathcal{C}_{1, \epsilon} (=: \mathcal{C}_\epsilon)$,
i.e., $\mathcal{C}_\epsilon$ is a mushroom. Finally, based on the fact that $\lambda_\epsilon \to 0$ as $\epsilon \to 0^+$
(see \cite[Lemma 6.6]{RQU}), we may apply Whyburn's topological method to infer that $\mathcal{C}_0 = \limsup_{\epsilon \to 0^+} \mathcal{C}_\epsilon$ is  a non-empty component of non-negative solutions $(\lambda, u)$ of $(P_\lambda)$. The limiting features of $\mathcal{C}_0$ mentioned above follow by using some additional results on the set of non-negative solution of $(P_\lambda)$.

In addition to these features, we shall provide a further result on the direction of bifurcation at $(0,0)$ for $\mathcal{C}_0$, using  Whyburn's topological method again. We remark that, although properties (i)-(iv) above provide that
$\mathcal{C}_0$ is a 	
loop, i.e.,
$\mathcal{C}_0$ joins $(0,0)$ to itself passing by $(0, u_0)$ for some positive solution $u_0$ of $(P_\lambda)$ with $\lambda = 0$, Theorem \ref{thm:dir} confirms additionally 		
the loop property of $\mathcal{C}_0$, as follows:

Given $\rho > 0$, we set
\begin{align*}
B_\rho((\lambda_1, u_1)) = \{ (\lambda, u) \in \mathbb{R} \times C(\overline{\Omega}) : |\lambda - \lambda_1| + \| u - u_1 \|_{C(\overline{\Omega})} < \rho \}. 
\end{align*}

\begin{theorem} \label{thm:dir}
Under the conditions stated above,
$\mathcal{C}_{0}$ contains closed connected sets $\mathcal{C}^{\pm}_{0}$ such that $(0,0) \in\mathcal{C}^{\pm}_{0}$ and $\mathcal{C}^{\pm}_0 \neq \{ (0,0)\}$. Moreover, if $(\lambda, u) \in\mathcal{C}^{\pm}_{0} \setminus\{ (0,0)\}$ then $\lambda\gtrless0$, i.e. $\mathcal{C}%
_{0}$ bifurcates both to the left and to the right at $(0,0)$,  see Figure \ref{fig16_0709}.
\end{theorem}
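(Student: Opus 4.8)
The plan is to refine the Whyburn argument that produced $\mathcal{C}_0$, carrying it out separately on the two half-spaces $\{\lambda \geq 0\}$ and $\{\lambda \leq 0\}$, so that the resulting limit superiors deliver the two subcontinua $\mathcal{C}_0^{\pm}$. Recall that $\mathcal{C}_0$ was obtained as $\limsup_{\epsilon \to 0^+}\mathcal{C}_\epsilon$, where each $\mathcal{C}_\epsilon$ is a mushroom for $(P_{\lambda,\epsilon})$ bifurcating from $(0,0)$ and from $(\lambda_\epsilon,0)$, with $\lambda_\epsilon \to 0^+$. The key local input is the Crandall--Rabinowitz description of $\mathcal{C}_\epsilon$ near $(0,0)$: near the bifurcation point $(0,0)$ the component is a curve $s \mapsto (\lambda(s), u(s))$ with $u(s) = s\cdot 1 + o(s)$ (the principal eigenfunction at $\lambda = 0$ is the constant $1$) and $\lambda(s)$ having a definite sign for $s>0$ small (the transcritical/subcritical direction determined by the reduced equation). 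Since $(P_{\lambda,\epsilon})$ admits, besides $\Gamma_0$, the line $\Gamma_{00}$ of constant positive solutions at $\lambda=0$, the bifurcating branch at $(0,0)$ must leave $\lambda=0$ immediately on at least one side; I would first establish that, in fact, $\mathcal{C}_\epsilon$ leaves $(0,0)$ into {\it both} $\{\lambda>0\}$ and $\{\lambda<0\}$, by analyzing the sign of $\lambda(s)$ via the bifurcation formula together with the constraint coming from the divergence theorem (as in \eqref{divvn}), which forces the two local branches emanating at $(0,0)$ onto opposite sides of $\lambda=0$.

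Concretely, fix a small ball $B_r := B_r((0,0))$ in $\mathbb{R}\times C(\overline\Omega)$ with $r$ smaller than the radius $\delta$ from property (iv), so that $B_r$ contains no positive solution of $(P_\lambda)$ at $\lambda=0$ other than contributions from $\Gamma_{00}$ near $c=0$, which themselves are excluded once $r$ is small because $c^*_0>0$. For each $\epsilon$ small, the mushroom $\mathcal{C}_\epsilon$ enters $B_r$ and, by the local analysis above, the portion of $\mathcal{C}_\epsilon$ inside $B_r$ that attaches to $(0,0)$ splits into two arcs, one in $\{\lambda>0\}$ and one in $\{\lambda<0\}$, each of which must exit $\partial B_r$ (it cannot terminate inside $B_r$, since a nontrivial branch of a component cannot stop except at a bifurcation point or on the boundary of the region under consideration). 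Let $\mathcal{C}_\epsilon^{\pm} := \overline{\,\text{(the connected component of }\mathcal{C}_\epsilon\cap(\overline{B_r}\cap\{\pm\lambda\geq 0\})\text{ containing }(0,0))\,}$. Each $\mathcal{C}_\epsilon^{\pm}$ is a closed connected set containing $(0,0)$ and meeting $\partial B_r$; in particular it is nontrivial. The uniform {\it a priori} bounds for $(P_{\lambda,\epsilon})$ recalled in the text (via \cite[Proposition 6.5]{RQU}, \cite[Proposition 4.2]{RQUTMNA}) guarantee $\bigcup_\epsilon \mathcal{C}_\epsilon^{\pm}$ is precompact in $\overline{B_r}$, and $(0,0)\in\liminf_{\epsilon\to 0^+}\mathcal{C}_\epsilon^{\pm}$ trivially. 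Whyburn's theorem \cite[(9.12)Theorem]{Wh64} then yields that $\mathcal{C}_0^{\pm} := \limsup_{\epsilon\to 0^+}\mathcal{C}_\epsilon^{\pm}$ is nonempty, closed and connected, contains $(0,0)$, and (since each $\mathcal{C}_\epsilon^{\pm}$ reaches $\partial B_r$) meets $\partial B_r$, hence is not $\{(0,0)\}$. As in Section \ref{sec:proof}, elliptic regularity shows $\mathcal{C}_0^{\pm}$ consists of non-negative solutions of $(P_\lambda)$, and $\mathcal{C}_0^{\pm}\subset\mathcal{C}_0$ because each $\mathcal{C}_\epsilon^{\pm}\subset\mathcal{C}_\epsilon$. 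Finally, since every element of $\mathcal{C}_\epsilon^{\pm}$ has $\pm\lambda\geq 0$, the same holds for $\mathcal{C}_0^{\pm}$; and for $(\lambda,u)\in\mathcal{C}_0^{\pm}\setminus\{(0,0)\}$ one has $\lambda\neq 0$ by property (iv) (no small positive solution at $\lambda=0$ lies in $\mathcal{C}_0$) together with property (iii) (no $(\lambda,0)$ with $\lambda\neq 0$, and here $\lambda=0$ forces $u\equiv 0$ by property (ii)-type non-existence), so $\pm\lambda>0$ strictly.

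The main obstacle I anticipate is the step asserting that the regularized mushroom $\mathcal{C}_\epsilon$ genuinely bifurcates from $(0,0)$ to {\it both} sides, rather than, say, being tangent to $\{\lambda=0\}$ on one side or having its local $\{\lambda<0\}$ arc immediately re-collapse. This requires a careful bifurcation-direction computation at $(0,0)$ for $(P_{\lambda,\epsilon})$: because the nonlinearity $(b-\epsilon)\epsilon^{q-2}u + a|u|^{p-2}u + \text{(higher order)}$ has vanishing quadratic term at $u=0$ (the leading correction is of order $|u|^{p-1}$ with $p>2$), the Crandall--Rabinowitz reduced bifurcation equation is degenerate at second order, and one must push to order $p-1$; the sign of the resulting coefficient is governed by $\int_\Omega a\,(\text{eigenfunction})^{p}$ together with $\int_\Omega(b-\epsilon)$, and one must check this sign is compatible with the transcritical crossing forced by $\Gamma_{00}$. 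A cleaner route, which I would pursue if the direct computation is unwieldy, is to argue indirectly: since $\mathcal{C}_\epsilon$ is a mushroom (hence $\mathcal{C}_{0,\epsilon}=\mathcal{C}_{1,\epsilon}$) joining $(0,0)$ to $(\lambda_\epsilon,0)$ with $\lambda_\epsilon>0$, and since near $(0,0)$ the set $\mathcal{C}_\epsilon\setminus\{(0,0)\}$ is locally two arcs (Crandall--Rabinowitz), the portion connecting to $(\lambda_\epsilon,0)$ accounts for one arc in $\{\lambda>0\}$; the {\it other} arc must then, by the structure of $\Gamma_{00}$ and the non-existence of positive solutions at $\lambda=0$ near $u=0$ (property (iv)), leave into $\{\lambda<0\}$ — otherwise both local arcs lie in $\{\lambda\geq 0\}$ and one obtains, tracing $\mathcal{C}_\epsilon$, a closed loop in $\{\lambda\geq 0\}$ not meeting $\Gamma_{00}$, contradicting the global description of $\mathcal{C}_\epsilon$ established in Section \ref{sec:a<0} (the mushroom must pass through $(0,c^*_\epsilon)\in\Gamma_{00}$). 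This indirect argument sidesteps the delicate order-$(p-1)$ expansion.
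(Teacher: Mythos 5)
Your overall strategy --- splitting the regularized continua along $\{\lambda\geq 0\}$ and $\{\lambda\leq 0\}$ and applying Whyburn's theorem to each half separately --- is indeed the one the paper uses. But the step on which your whole construction rests is false: for each fixed $\epsilon>0$, $\mathcal{C}_\epsilon$ does \emph{not} bifurcate from $(0,0)$ into both half-planes. By Crandall--Rabinowitz there is a single curve $s\mapsto(\lambda(s),u(s))$ of nontrivial solutions through $(0,0)$ with $u(s)=s\cdot 1+o(s)$, and only the half $s>0$ consists of positive solutions; the reduced equation (integrate $(P_{\lambda,\epsilon})$ over $\Omega$, giving $\lambda\approx -s^{p-2}(s+\epsilon)^{2-q}\int_\Omega a/\int_\Omega(b-\epsilon)$) shows this single arc enters $\{\lambda<0\}$, since $\int_\Omega a<0$ and $\int_\Omega(b-\epsilon)<0$ under \eqref{prehypo}. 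This is precisely the fact ``$\mathcal{C}_\epsilon$ bifurcates to the region $\lambda<0$ at $(0,0)$'' imported from \cite{RQU}. Consequently the connected component of $\mathcal{C}_\epsilon\cap\overline{B_r}\cap\{\lambda\geq 0\}$ containing $(0,0)$ may reduce to $\{(0,0)\}$, your $\mathcal{C}_\epsilon^+$ need not reach $\partial B_r$, and $\limsup_{\epsilon\to 0^+}\mathcal{C}_\epsilon^+$ could be trivial. Your fallback ``indirect argument'' does not repair this: it appeals to $\Gamma_{00}$ and the point $(0,c^*_\epsilon)$, but these belong to the rescaled problem $(Q_{\mu,\epsilon})$ of Sections \ref{sec:sr}--\ref{sec:proof}; constants are not solutions of $(P_{\lambda,\epsilon})$ at $\lambda=0$, so no such crossing of a trivial line is forced in the present setting.

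The paper's mechanism for producing the right-hand continuum is different and is the idea you are missing: the piece of $\mathcal{C}_\epsilon$ living in $\{\lambda\geq 0\}$ is anchored not at $(0,0)$ but at the \emph{second} bifurcation point $(\lambda_\epsilon,0)$ with $\lambda_\epsilon>0$, together with a positive solution $(0,u_\epsilon^+)$ of $(P_{\lambda,\epsilon})$ at $\lambda=0$ whose norm is bounded below by a constant independent of $\epsilon$ (the mushroom joins $(0,0)$ to $(\lambda_\epsilon,0)$ passing through large solutions at $\lambda=0$, so such a connected subset $\Sigma^+_\epsilon\subset\{\lambda\geq 0\}$ exists). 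The point $(0,0)$ then enters $\Sigma^+_0=\limsup_{\epsilon\to 0^+}\Sigma^+_\epsilon$ only in the limit, because $\lambda_\epsilon\to 0^+$, while nontriviality of $\Sigma^\pm_0$ follows from the uniform lower bound $\|u_\epsilon^\pm\|_{C(\overline{\Omega})}\geq\delta$. The final localization to a small ball $\overline{B_\rho((0,0))}$ with $\rho<\delta$, which you do have, then yields the strict signs $\lambda\gtrless 0$ away from $(0,0)$. No bifurcation-direction expansion to order $p-1$ is needed anywhere.
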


	 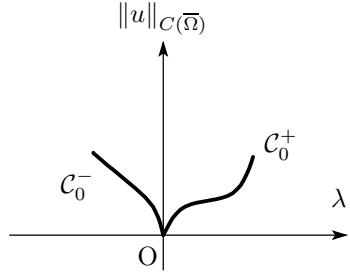
\begin{figure}[!htb]
  	   \begin{center}
{\unitlength 0.1in
\begin{picture}( 20.0100, 14.0000)( 18.0700,-22.5000)
%
\special{pn 8}%
\special{pa 2048 2068}%
\special{pa 3808 2068}%
\special{fp}%
\special{sh 1}%
\special{pa 3808 2068}%
\special{pa 3742 2048}%
\special{pa 3756 2068}%
\special{pa 3742 2088}%
\special{pa 3808 2068}%
\special{fp}%
%
\special{pn 8}%
\special{pa 2852 2250}%
\special{pa 2852 1060}%
\special{fp}%
\special{sh 1}%
\special{pa 2852 1060}%
\special{pa 2832 1126}%
\special{pa 2852 1112}%
\special{pa 2872 1126}%
\special{pa 2852 1060}%
\special{fp}%
\put(37.7100,-18.7000){\makebox(0,0){$\lambda$}}%
\put(28.4700,-9.3000){\makebox(0,0){$\| u \|_{C(\overline{\Omega})}$}}%
\put(27.7100,-21.7700){\makebox(0,0){O}}%
%
\special{pn 20}%
\special{pa 2852 2068}%
\special{pa 2884 2008}%
\special{pa 2900 1980}%
\special{pa 2920 1956}%
\special{pa 2942 1934}%
\special{pa 2968 1918}%
\special{pa 2998 1906}%
\special{pa 3028 1898}%
\special{pa 3062 1892}%
\special{pa 3094 1886}%
\special{pa 3128 1880}%
\special{pa 3160 1872}%
\special{pa 3190 1862}%
\special{pa 3216 1848}%
\special{pa 3238 1828}%
\special{pa 3258 1806}%
\special{pa 3274 1778}%
\special{pa 3288 1750}%
\special{pa 3302 1718}%
\special{pa 3314 1684}%
\special{pa 3322 1658}%
\special{fp}%
%
\special{pn 20}%
\special{pa 2852 2068}%
\special{pa 2844 2036}%
\special{pa 2836 2006}%
\special{pa 2826 1976}%
\special{pa 2812 1946}%
\special{pa 2798 1918}%
\special{pa 2758 1870}%
\special{pa 2710 1826}%
\special{pa 2684 1804}%
\special{pa 2660 1784}%
\special{pa 2610 1742}%
\special{pa 2586 1722}%
\special{pa 2560 1702}%
\special{pa 2512 1660}%
\special{pa 2488 1640}%
\special{pa 2486 1636}%
\special{fp}%
\put(34.6800,-15.9200){\makebox(0,0){$\mathcal{C}^+_0$}}%
\put(24.0400,-18.0400){\makebox(0,0){$\mathcal{C}^-_0$}}%
\end{picture}}%
	  \caption{A bifurcation diagram for $\mathcal{C}_0$ at $(0,0)$: the case $\int_\Omega a < 0$.}
	\label{fig16_0709}
	  \end{center}
	    \end{figure}

\begin{proof}
Let $\Sigma^{+}_{\epsilon}$ and $\Sigma^{-}_{\epsilon}$ be closed connected subsets of $\{ (\lambda, u) \in \mathcal{C}_\epsilon : \lambda \geq 0 \}$ and $\{ (\lambda, u) \in \mathcal{C}_\epsilon : \lambda \leq 0 \}$, respectively, such that $(0,0), (0, u_\epsilon^-) \in \Sigma^-_\epsilon$, and
$(\lambda_\epsilon, 0), (0, u_\epsilon^+)$  $\in \Sigma^+_{\epsilon}$ for some positive solutions $u_\epsilon^{\pm}$ of $(P_\lambda)$ with $\lambda = 0$, see Figure  \ref{fig17_0309c}.
%
	 \begin{figure}[!htb]
	   \begin{center}
{\unitlength 0.1in
\begin{picture}( 22.3400, 17.0000)( 19.6800,-21.5000)
%
\special{pn 8}%
\special{pa 2208 1936}%
\special{pa 4202 1936}%
\special{fp}%
\special{sh 1}%
\special{pa 4202 1936}%
\special{pa 4136 1916}%
\special{pa 4150 1936}%
\special{pa 4136 1956}%
\special{pa 4202 1936}%
\special{fp}%
%
\special{pn 8}%
\special{pa 3120 2150}%
\special{pa 3120 740}%
\special{fp}%
\special{sh 1}%
\special{pa 3120 740}%
\special{pa 3100 806}%
\special{pa 3120 792}%
\special{pa 3140 806}%
\special{pa 3120 740}%
\special{fp}%
\put(42.4300,-18.1000){\makebox(0,0){$\lambda$}}%
\put(31.1000,-5.3000){\makebox(0,0){$\| u\|_{C(\overline{\Omega})}$}}%
\put(30.4300,-20.0900){\makebox(0,0){O}}%
\put(26.6300,-13.9900){\makebox(0,0){$\Sigma_{\epsilon}^-$}}%
\put(40.0000,-14.0000){\makebox(0,0){$\Sigma_{\epsilon}^+$}}%
\put(35.4000,-20.4000){\makebox(0,0){$\lambda_\epsilon$}}%
%
\special{pn 20}%
\special{pa 3508 1930}%
\special{pa 3558 1884}%
\special{pa 3606 1838}%
\special{pa 3628 1814}%
\special{pa 3652 1790}%
\special{pa 3672 1766}%
\special{pa 3692 1740}%
\special{pa 3712 1716}%
\special{pa 3728 1688}%
\special{pa 3744 1662}%
\special{pa 3758 1634}%
\special{pa 3770 1604}%
\special{pa 3780 1574}%
\special{pa 3792 1510}%
\special{pa 3794 1478}%
\special{pa 3796 1444}%
\special{pa 3794 1410}%
\special{pa 3790 1376}%
\special{pa 3784 1342}%
\special{pa 3776 1308}%
\special{pa 3766 1276}%
\special{pa 3756 1242}%
\special{pa 3728 1182}%
\special{pa 3710 1152}%
\special{pa 3692 1124}%
\special{pa 3672 1100}%
\special{pa 3650 1076}%
\special{pa 3628 1054}%
\special{pa 3604 1036}%
\special{pa 3578 1018}%
\special{pa 3552 1002}%
\special{pa 3496 978}%
\special{pa 3466 968}%
\special{pa 3434 958}%
\special{pa 3404 950}%
\special{pa 3370 944}%
\special{pa 3338 938}%
\special{pa 3270 930}%
\special{pa 3202 924}%
\special{pa 3168 922}%
\special{pa 3132 920}%
\special{pa 3118 918}%
\special{fp}%
%
\special{pn 8}%
\special{pa 3118 918}%
\special{pa 3078 928}%
\special{pa 3040 938}%
\special{pa 3012 948}%
\special{pa 2998 958}%
\special{pa 2998 970}%
\special{pa 3014 982}%
\special{pa 3038 996}%
\special{pa 3072 1014}%
\special{pa 3108 1032}%
\special{pa 3146 1056}%
\special{pa 3180 1082}%
\special{pa 3210 1110}%
\special{pa 3228 1142}%
\special{pa 3238 1172}%
\special{pa 3230 1196}%
\special{pa 3210 1212}%
\special{pa 3180 1224}%
\special{pa 3142 1230}%
\special{pa 3118 1234}%
\special{fp}%
\put(33.7000,-13.1000){\makebox(0,0){$(0, u_\epsilon^-)$}}%
\put(34.3000,-8.2000){\makebox(0,0){$(0, u_\epsilon^+)$}}%
%
\special{pn 20}%
\special{pa 3118 1932}%
\special{pa 3098 1864}%
\special{pa 3086 1832}%
\special{pa 3072 1804}%
\special{pa 3054 1776}%
\special{pa 3034 1754}%
\special{pa 3008 1738}%
\special{pa 2978 1726}%
\special{pa 2944 1718}%
\special{pa 2874 1708}%
\special{pa 2842 1702}%
\special{pa 2812 1694}%
\special{pa 2786 1682}%
\special{pa 2768 1664}%
\special{pa 2756 1640}%
\special{pa 2752 1612}%
\special{pa 2754 1578}%
\special{pa 2760 1544}%
\special{pa 2772 1508}%
\special{pa 2786 1472}%
\special{pa 2804 1438}%
\special{pa 2826 1406}%
\special{pa 2870 1354}%
\special{pa 2922 1314}%
\special{pa 2950 1298}%
\special{pa 2978 1284}%
\special{pa 3008 1270}%
\special{pa 3036 1260}%
\special{pa 3066 1248}%
\special{pa 3098 1238}%
\special{pa 3118 1232}%
\special{fp}%
\end{picture}}%
	  \caption{The behaviors of $\Sigma_{\epsilon}^\pm$.}
	\label{fig17_0309c}
	  \end{center}
	    \end{figure}
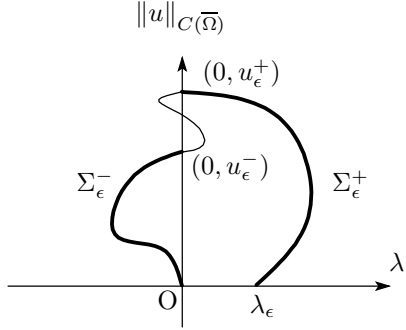
$\Sigma^{+}_{\epsilon}$ and $\Sigma^{-}_{\epsilon}$ are well defined by virtue of the following facts, see \cite[Sections 5 and 6]{RQU}.

\begin{itemize}
\item $\mathcal{C}_\epsilon = \{ (\lambda, u) \}$ is continuously parametrized by $\lambda = \lambda_k (s), u=u_k (s)$ for $s \in [0, s_0)$, $k=0,1$, in certain neighborhoods of the bifurcation points $(0,0)$ and $(\lambda_\epsilon, 0)$. In addition, $\lambda_k$ and $u_k$ satisfy $(\lambda_0(0), u_0(0)) = (0,0), (\lambda_1(0), u_1(0))=(\lambda_\epsilon, 0)$, respectively;
\item $\mathcal{C}_\epsilon$ bifurcates to the region $\lambda < 0$ at $(0,0)$ under condition \eqref{prehypo};
\item $\mathcal{C}_\epsilon$ contains a positive solution $w_0$ of $(P_\lambda)$ with $\lambda = 0$;
\item $\mathcal{C}_\epsilon$ does not contain any point $(\lambda,0)$ with $\lambda \neq 0, \lambda_\epsilon$;
\item There exists $\delta > 0$, independent of $\epsilon$, such that $\mathcal{C}_\epsilon$ does not contain any positive solution $u$ of $(P_{\lambda, \epsilon})$ with $\lambda = 0$ satisfying $\Vert u \Vert_{C(\overline{\Omega})} \leq \delta$. Consequently, for the positive solution $w_0$ above, we have that $\| w_0 \|_{C(\overline{\Omega})}$ is bounded below by some positive constant independent of $\epsilon$.
\end{itemize}

Since $\Sigma^\pm_{\epsilon} \subset \mathcal{C}_\epsilon$, we observe that
\begin{align*} 
\Sigma^\pm_{0} :=
\limsup_{\epsilon \to 0^+} \Sigma^\pm_{\epsilon} \subset
\limsup_{\epsilon \to 0^+} \mathcal{C}_\epsilon = \mathcal{C}_{0}.
\end{align*}
Repeating the argument above,
Whyburn's topological approach yields that
$\Sigma^\pm_{0}$ are non-empty, closed and connected sets consisting of non-negative solutions of $(P_\lambda)$ and such that $(0,0) \in \liminf_{\epsilon \to 0^+} \Sigma^\pm_{\epsilon} \subset \Sigma^\pm_{0}$. By property (iv), we have $(0, u_0^{\pm}) \in \Sigma_{0}^{\pm}$ for some positive solutions $u_0^{\pm}$ of $(P_\lambda)$ with $\lambda = 0$. It follows that
$\Sigma^\pm_{0} \neq \{ (0,0) \}$, and moreover, by property (iii), that $\Sigma^\pm_{0} \setminus \{ (0,0)\}$ consists of nontrivial non-negative solutions of $(P_\lambda)$.

Now, by definition, we see that $(\lambda, u) \in \Sigma^+_{0}$ (respect.\ $\Sigma^-_{0}$) implies $\lambda \geq 0$ (respect.\ $\lambda \leq 0$).
Lastly, by using property (iv) again, there exists $\rho > 0$ small enough such that  $\Sigma_{0, \rho}^{\pm} := \Sigma^\pm_{0} \cap \overline{B_\rho ((0,0))}$
is closed and connected, and
if $(\lambda, u) \in \Sigma^\pm_{0, \rho} \setminus \{ (0,0) \}$
then $\lambda \gtrless 0$. Therefore $\mathcal{C}^\pm_0 := \Sigma^\pm_{0, \rho}$ have the desired properties.
\end{proof}

\vspace{1cm}

\end{document}